\def\build#1_#2^#3{\mathrel{
\mathop{\kern 0pt#1}\limits_{#2}^{#3}}}
\def\R{\mathop{\mathbb R\kern 0pt}\nolimits}
\def\H{\mathop{\mathbb H\kern 0pt}\nolimits}
\def\C{\mathop{\mathbb C\kern 0pt}\nolimits}
\def\N{\mathop{\mathbb N\kern 0pt}\nolimits}
\def\ZZZ{\mathop{\mathbb Z\kern 0pt}\nolimits}
\def\Q{\mathop{\mathbb Q\kern 0pt}\nolimits}
\newtheorem{thm}{Theorem}[section]
\newtheorem{pro}[thm]{Proposition}
\newtheorem{lem}[thm]{Lemma}
\newtheorem{lemma}[thm]{Lemma}
\newtheorem{cor}[thm]{Corollary}
\newtheorem{remark}[thm]{Remark}
\newtheorem{defi}[thm]{Definition}
\def\R{{\mathbb R}}
\def\N{{\mathbb N}}
\def\Z{{\mathbb Z}}
\def\C{{\mathbb C}}
\def\al{\alpha}
\def\D{\Delta}
\def\X{{\mathcal  X}}
\def\la{\lambda}
\def\rh{\rho}
\def\H{{\mathcal H}}
\newtheorem{theo}{Theorem}
\def \lam{\lambda}
\def\D{\Delta}
\def \al{\alpha}
\def\virgp{\raise 2pt\hbox{,}}
\def\and{\quad\hbox{and}\quad}
\def\G{{\mathbb G}}
\begin{document}

\title[Besov algebras on Lie groups of polynomial growth]{Besov algebras on Lie groups of polynomial growth}

\author{Isabelle Gallagher}
\address{Institut de Math{\'e}matiques UMR 7586, 
      Universit{\'e} Paris VII, 175, rue du Chevaleret, 75013 Paris, France}
\email{gallagher@math.jussieu.fr}

\author{Yannick Sire}
\address{Universit\'e Paul C\'ezanne, LATP,
Facult\'e des Sciences et Techniques, Case cour A,
Avenue Escadrille Normandie-Niemen, F-13397 Marseille Cedex 20,
France, and CNRS, LATP, CMI, 39 rue F. Joliot-Curie, F-13453 Marseille Cedex
13, France}
\email{sire@cmi.univ-mrs.fr}
\thanks{The first  author is partially supported by
the ANR project ANR-08-BLAN-0301-01 "Mathoc\'ean", as well as by the Institut Universitaire de France. The second author is supported by
the ANR project "PREFERED".}

\begin{abstract}
We prove an algebra property under pointwise multiplication for   Besov spaces defined on Lie groups of polynomial growth.  When the setting is restricted to the case of~H-type groups, this algebra property is generalized to paraproduct estimates.
 \end{abstract}

\maketitle
\section{Introduction}

\subsection{Lie groups  of polynomial growth}  In this paper $\mathbb G$ is an  unimodular connected Lie group endowed with the Haar measure. By ``unimodular'' we mean that the Haar measure is left and right-invariant.  Denoting by $\mathcal  G$  the Lie algebra  of $\mathbb G$, we consider a family~$\mathbb X= \left \{ X_1,...,X_k \right \}$
of left-invariant vector fields on $\mathbb G$ satisfying the H\"ormander condition, i.e. $\mathcal G$ is the Lie algebra generated by the $X_i's$. In the following, although not stated, all the functional spaces depend on the field $\mathbb X$. 

\noindent A standard metric on~$\mathbb G$, called the Carnot-Caratheodory metric, is naturally associated with $\mathbb X$ and is defined as follows: let $\ell : [0,1] \to \G$ be an absolutely continuous path. We say that $\ell$ is admissible if there exist measurable functions $c_1,...,c_k : [0,1] \to \mathbb C$ such that, for almost every $t \in [0,1]$, one has~$\displaystyle\ell'(t)=\sum_{i=1}^k c_i(t) X_i(\ell(t)).$
If~$\ell$  is admissible, its length is defined by~$\displaystyle |\ell |= \int_0^1\big(\sum_{i=1}^k |c_i(t)|^2 \,dt \big)^{ \frac 12}$.
 For all $x,y \in \G $, define~$d(x,y)$ as  the infimum of the lengths  of all admissible paths joining $x$ to $y$ (such a curve exists by the H\"ormander condition). This distance is left-invariant. For short, we denote by~$|x|$ the distance between $e$, the neutral element of the group, and~$x$  so that the distance from $x$ to~$y$ is equal to  $|y^{-1}x|$. 
For all $r>0$, denote by $B(x,r)$ the open ball in $\G$ with respect to the Carnot-Caratheodory distance and by $V(r)$ the Haar measure of any ball. There exists $d\in \N^{\ast}$ (called the local dimension of $(G,\mathbb X)$) and~$0<c<C$ such that, for all $r\in ]0,1[$,
$$
cr^d\leq V(r)\leq Cr^d,
$$
see \cite{nsw}. When $r>1$, two situations may occur (see \cite{guivarch}): 
\begin{itemize}
\item Either there exist $c,C,D >0$ such that, for all $r>1$, 
$c r^D \leq V(r) \leq C r^D$
where~$D$ is called the dimension at infinity of the group (note that, contrary to $d$, $D$ does not depend on $\mathbb X$). The group is said to have polynomial volume growth. 
\item Or  there exist $c_1,c_2,C_1,C_2 >0$ such that, for all $r>1$, 
$c_1 e^{c_2r} \leq V(r) \leq C_1 e^{C_2r}$
and the group is said to have exponential volume growth. 
\end{itemize} 
When $\G$ has polynomial volume growth, it is plain to see that there exists a constant~$C>0$ such that  for all $r>0$, $V(2r)\leq CV(r)$.  In turn this implies that there exist $C>0$ and $\kappa>0$ such that  for all~$r>0$ and all $\theta>1$,
$V(\theta r)\leq C\theta^{\kappa}V(r).$

\noindent We denote $\Delta_\G  =\displaystyle \sum_{i=1}^k X_i^2$
the sub-laplacian on $\G$.

\subsection{Nilpotent Lie groups} A Lie group is said to be   {\it nilpotent}  if  its Lie algebra~${\mathcal G}$ is nilpotent: more precisely writing~${\mathcal G}^1 = {\mathcal G}$ and defining inductively~¬ ${\mathcal G}^{k+1} = [{\mathcal G}^k,{\mathcal G}^k]$, then there is~$n$ such that~${\mathcal G}^n = \{0\}$. It can be shown that such groups are always of polynomial growth (see for instance~\cite{dungey}).
 
\subsection{Stratified (Carnot) and H-type groups}\label{carnot}  Stratified groups are a particular version of nilpotent  groups, which admit a {\it stratified} structure and for which~$V(r)\sim r^Q$ for some positive~$Q$, for all~$r>0$. One advantage of this additional structure is that such groups admit dilations.   Important examples of such groups are H-type groups, a particular example being the Heisenberg group.

\noindent   More precisely, a stratified (or Carnot) Lie group $\G$ is  simply connected and its Lie algebra admits a stratification, i.e. there exist linear subspaces $V_1,...,V_r$ of $\mathcal G$ such that 
$\mathcal G= V_1 \oplus ... \oplus V_r$
which satisfy~$[V_1,V_i]=V_{i+1}$
for $i=1,...,r-1$ and~$[V_1,V_r]=0$. By~$[V_1,V_i]$ we mean the subspace of~$\mathcal G$ generated by the elements $[X,Y]$ where~$X \in V_1$ and $Y \in V_i$. Carnot groups are nilpotent. Furthermore, via the exponential map,~$\mathbb G$ and~$\mathcal G$ can be identified as manifolds. The dilations $\gamma_\delta$ ($\delta >0$) are then defined (on the Lie algebra level) by 
$$\gamma_{\delta}(x_1+...+x_r)=\delta x_1+ \delta^2x_2+...+\delta^r x_r,\,\,\,\,x_i \in V_i. $$ 
 We   define the homogeneous dimension $Q=\mbox{dim} V_1+ 2  \mbox{dim} V_2+\dots+r  \mbox{dim} V_r. $
If $\G$ is a Carnot group, we have for all $r>0$, $V(r)\sim r^Q$ (see \cite{FS}).  We shall say that the~$Q-$dimensional Carnot group is of step~$r$: for instance the Heisenberg group ${\mathcal H}^d$ is a Carnot group and~$Q=2d+2$.

\noindent The previous abstract definition of Carnot groups is not always very practical. It is however possible to prove (see \cite{bonUgu}) that any $N $-dimensional Carnot group of step~$2$ with~$m$ generators is isomorphic to~$(\mathbb R^N, \circ)$ with the law given by ($N=m+n$, $x^{(1)} \in \mathbb R^m,x^{(2)} \in \mathbb R^n$)
$$
(x^{(1)},x^{(2)}) \circ (y^{(1)},y^{(2)})=\begin{pmatrix}
x_j^{(1)}+y_j^{(1)},\,\,\,j=1,...,m \\x_j^{(2)}+y_j^{(2)}+\frac12 \langle x^{(1)}, U^{(j)}y^{(1)} \rangle,\,\,\,j=1,...,n
\end{pmatrix},
$$
where $U^{(j)}$ are $m \times m$ linearly independent skew-symmetric matrices.

\noindent  With this at hand, one can give the definition of a group of Heisenberg-type (H-type henceforth).  These groups are two-step stratified nilpotent Lie groups whose Lie algebra carries a suitably compatible inner product, see \cite{kaplan1}. One of these groups is the nilpotent Iwasawa subgroup of semi-simple Lie groups of split rank one (see~\cite{koranyi2}). 
More precisely, an H-type group is a Carnot group of step $2$ with the following property: the Lie algebra~$\mathcal G $ of $\G$ is endowed with an inner product~$\langle \cdot , \cdot\rangle$ such that if $\mathcal Z$ is the center of~$\mathcal G$, then~$[\mathcal Z^\perp,\mathcal Z^\perp]=\mathcal Z$
and moreover for every $z \in \mathcal Z$, the map $J_z: \mathcal Z^\perp \to \mathcal Z^\perp$ defined by~$
\langle J_z (v), w \rangle= \langle z, [v,w] \rangle
$
for every $w \in \mathcal Z^\perp$ is an orthogonal map whenever $\langle z, z \rangle =1$.
  If~$m= $ dim $(\mathcal Z^\perp)$ and $n=$ dim $(\mathcal Z)$, then any H-type group is canonically isomorphic to~$\mathbb R^{m+n}$ with the above group law, where the matrices~$U^{(j)}$   satisfy  the additional property~$U^{(r)}U^{(s)}+U^{(s)}U^{(r)}=0$
for every $r,s \in \left \{1,...,n \right \}$ with $r \neq s$. Whenever the center of the group is one-dimensional, the group is canonically isomorphic to the Heisenberg group on $\mathbb R^{m+1}$. We shall always identify~$\mathcal Z^\perp$ with $\C^\ell$ with~$2\ell=m$ and~$\mathcal Z$ to~$\R^n$ thanks to the discussion  above.  Note that the homogeneous dimension of a H-type group so defined is~$Q = 2 \ell + n$. On an H-type group $\G$, the vector-fields in the algebra $\mathcal G$ are given by 
$$X_j=\frac{\partial }{\partial x_j}+ \frac12 \sum_{k=1}^n \sum_{l=1}^{2\ell} z_l U^{(k)}_{l,j} \frac{\partial}{\partial t_k} \quad \mbox{and} \quad Y_j=\frac{\partial }{\partial y_j}+ \frac12 \sum_{k=1}^n \sum_{l=1}^{2\ell} z_l U^{(k)}_{l,j+\ell} \frac{\partial}{\partial t_k}$$
for $j=1,...,\ell$, $z=(x,y) \in \R^{2¬¨¬®‚Äö√Ñ‚Ä \ell}$ and $t \in \R^n$. 
In the following we shall denote by~${\mathcal X} $ any element of the family~$(X_1,\dots,X_\ell,Y_1,\dots,Y_\ell)$.
  The hypo-elliptic Kohn Laplacian on H-type groups writes  
$$\Delta_\G= \sum_{j=1}^m \frac{\partial^2}{\partial x^2_j}+\frac14 |x|^2 \sum_{s=1}^n \frac{\partial^2}{\partial t^2_s}+ \sum_{s=1}^n \sum_{i,j=1}^m x_i U^{(s)}_{ij} \frac{\partial^2}{\partial t_s \partial x_j} \cdotp $$ 

\subsection{Main results and structure of the paper}
In  \cite{crt}, the authors investigate  the algebra properties of the Bessel space 
$$L^p_\alpha(\mathbb G) = \bigl\{ f \in L^p (\mathbb G), \: (-\Delta_\G)^\frac\alpha 2f \in L^p (\mathbb G) \bigr\}
$$
and their homogeneous counterpart,
where $\mathbb G$ is any unimodular Lie group. 

\medskip
\noindent   Our first theorem  considers   Besov spaces in the general setting of groups with polynomial volume growth. The case~$s \in (0,1)$ is obtained, both for inhomogeneous and homogeneous spaces, by using an equivalent definition in terms of differences (see~\cite{saloffcoste}). The general case is only proved in the case of inhomogeneous  spaces and uses the fact that local Riesz transforms are continuous in~$L^p$ for~$1<p<\infty$ (whence the restriction on~$p$ below), along with an interpolation argument to obtain all values of~$s$. 
\begin{theo}\label{main}
Let $\G$ be a Lie group with polynomial volume growth.

\medskip

\noindent
 For every~$s \in (0,1)$ and~$1 \leq p,q \leq \infty$, the spaces~$B^s_{p,q}(\G) \cap L^\infty(\G)$ and~$\dot B^s_{p,q}(\G) \cap L^\infty(\G)$ are algebras under pointwise multiplication. 

\smallskip

\noindent The same property holds if~$s \geq 1$ for~$B^s_{p,q}(\G) \cap L^\infty(\G)$, with the additional restriction that~$1< p<\infty$.
\end{theo}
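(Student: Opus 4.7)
The plan is to split the argument at $s=1$, using two different equivalent descriptions of the Besov norm. For $s\in(0,1)$, the reference \cite{saloffcoste} furnishes, for both the homogeneous and inhomogeneous norms, the equivalent definition in terms of the $L^p$-modulus of continuity
$$
\|f\|_{\dot B^s_{p,q}(\G)} \sim \left(\int_0^\infty\left(\frac{\omega_p(f,t)}{t^s}\right)^q\frac{dt}{t}\right)^{1/q}, \qquad \omega_p(f,t):=\sup_{|h|\leq t}\|f(\cdot\,h)-f(\cdot)\|_{L^p(\G)}.
$$
The elementary identity
$$
(fg)(xh)-(fg)(x)=f(xh)\bigl(g(xh)-g(x)\bigr)+g(x)\bigl(f(xh)-f(x)\bigr),
$$
after taking $L^p$-norms in $x$, immediately gives $\omega_p(fg,t)\le \|f\|_\infty\,\omega_p(g,t)+\|g\|_\infty\,\omega_p(f,t)$. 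Integrating against $t^{-sq-1}\,dt$ (with the usual modification when $q=\infty$) produces the Leibniz-type bound
$$
\|fg\|_{\dot B^s_{p,q}}\lesssim \|f\|_\infty\|g\|_{\dot B^s_{p,q}}+\|g\|_\infty\|f\|_{\dot B^s_{p,q}},
$$
which together with $\|fg\|_\infty\le \|f\|_\infty\|g\|_\infty$ and, for the inhomogeneous space, $\|fg\|_{L^p}\le\|f\|_\infty\|g\|_{L^p}$, settles this range for both spaces simultaneously.

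For $s\geq 1$ and $1<p<\infty$, the difference characterization is no longer sufficient; instead I would reduce to integer-order Sobolev estimates and interpolate. The key step is the tame Leibniz inequality
$$
\|fg\|_{L^p_k(\G)}\lesssim\|f\|_{L^p_k(\G)}\|g\|_\infty+\|f\|_\infty\|g\|_{L^p_k(\G)},\qquad k\in\N,\ 1<p<\infty,
$$
proved by expanding $X_{i_1}\cdots X_{i_k}(fg)$ via the Leibniz rule of the first-order left-invariant vector fields $X_i$, estimating each resulting product $(X^\alpha f)(X^\beta g)$ with $|\alpha|+|\beta|=k$ in $L^p$ by an $L^\infty\times L^p$ (or $L^p\times L^\infty$) H\"older bound, and then transferring between iterated vector fields $X^\alpha$ and powers of the sub-Laplacian $(-\Delta_\G)^{k/2}$ by means of local Riesz transforms (this is precisely where the restriction $1<p<\infty$ enters). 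Since $B^s_{p,q}(\G)$ arises as a real interpolation space between $L^p_{k_0}(\G)$ and $L^p_{k_1}(\G)$ with $k_0<s<k_1$ integers, and since tame bilinear estimates pass to real interpolation spaces, the same inequality then transfers to every $B^s_{p,q}$ with $s\geq 1$, completing the proof.

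The main obstacle is the integer-order Sobolev step. Because the vector fields $X_i$ do not commute on a general group $\G$, the identification of $\|f\|_{L^p_k}$ with $L^p$-norms of iterated derivatives $X^\alpha f$ is not automatic; it relies on the $L^p$-continuity of local Riesz transforms on groups of polynomial growth, which is exactly the analytic fact forcing $1<p<\infty$. Once this is granted, the subsequent interpolation step is routine and introduces no further restrictions on $s$ or $q$.
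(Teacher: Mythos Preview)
Your treatment of the range $s\in(0,1)$ is exactly the paper's argument: both reduce to the difference characterization and the pointwise identity for $\tau_h(fg)-fg$. The paper additionally reproves the equivalence of norms via Littlewood--Paley rather than citing \cite{saloffcoste}, but this is cosmetic.

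For $s\ge 1$ your route is genuinely different from the paper's. The paper never proves the integer Sobolev tame estimate and then interpolates up; instead it argues by induction on~$s$ directly at the Besov level, using the equivalence $f\in B^{s+1}_{p,q}\iff X_if\in B^{s}_{p,q}$ (via local Riesz transforms) together with the mixed-H\"older product estimate of Proposition~\ref{moregeneralresult}, and invokes interpolation only at the very end to fill in integer values of~$s$. Your strategy---integer~$L^p_k$ estimate first, then real interpolation to all~$B^s_{p,q}$---is a legitimate alternative, but as written it has a real gap.

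The problem is the cross terms. In the expansion of $X_{i_1}\cdots X_{i_k}(fg)$ you obtain products $(X^\alpha f)(X^\beta g)$ with $0<|\alpha|,|\beta|<k$. An $L^\infty\times L^p$ H\"older bound on such a term produces $\|X^\alpha f\|_{L^\infty}\|X^\beta g\|_{L^p}$, and $\|X^\alpha f\|_{L^\infty}$ is \emph{not} controlled by $\|f\|_{L^p_k}+\|f\|_\infty$: derivatives of bounded functions need not be bounded, and local Riesz transforms do not act on~$L^\infty$. The missing ingredient is a Gagliardo--Nirenberg inequality of the type $\|X^\alpha f\|_{L^{r_\alpha}}\lesssim \|f\|_{L^p_k}^{|\alpha|/k}\|f\|_\infty^{1-|\alpha|/k}$ with matched exponents $1/r_\alpha+1/r_\beta=1/p$; this is precisely how \cite{crt} handles the Sobolev algebra, and it is not a consequence of Riesz transform bounds alone. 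A second, smaller issue: the claim that ``tame bilinear estimates pass to real interpolation spaces'' is not routine. The cross structure $\|f\|_{X}\|g\|_{Y}+\|f\|_{Y}\|g\|_{X}$ does not fit the standard bilinear interpolation theorems, and making this precise typically requires either a specific choice of $K$-functional decomposition with uniform $L^\infty$ control on the pieces, or an argument tailored to the Littlewood--Paley structure. The paper itself only interpolates across the isolated integer values and is equally terse there, but in your scheme the entire range $s\ge1$ rests on this step.
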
\begin{remark}
We shall   give  a generalization of Theorem~{\rm\ref{main}}   to the case when the space~$L^\infty(\G) $ is replaced by~$L^r(\G) $ (see Propositions~{\rm\ref{moregeneralresult}} and~{\rm\ref{moregeneralresult2}}).
\end{remark}

\noindent
One can recover the full range of indexes~$p$, as well as   homogeneous Besov spaces, in the context of H-type groups thanks to the paraproduct algorithm. Before stating the result let us give an intermediate statement in the case of nilpotent groups.  Its proof requires   
the continuity of  Riesz transforms, as well as 
a result which is to our knowledge new even in the context of the Heisenberg group (see Proposition~\ref{bs+1bs}) and which links~$\dot  B^s_{p,q}(\G) $ and~$\dot B^{s+1}_{p,q}(\G) $ in terms of the action of~$X_i$ and not only powers of the sublaplacian. 
\begin{theo}\label{mainnil} 
Let $\G$ be a  nilpotent Lie group.  

\medskip

\noindent
For every $1 \leq s< d$, the space~$\dot B^s_{\frac ds,1} (\G)$ is embedded in~$L^\infty (\G)$ and is an algebra. 

\smallskip
\noindent Moreover for every~$1 < s$ and every~$1<p <\infty$,  if~$f$ and~$g$ belong to the space~$ \dot B^s_{p,\frac{s-1}s  } \cap L^{\infty}(\G) $  then~$fg $ belongs to~$ \dot B^s_{p,1} \cap L^{\infty}(\G)$.

\smallskip
 \noindent Finally if~$1<p_1,p_2<\infty$ with~$1/p = 1/p_1+1/p_2$, if~$1 \leq q \leq \infty$, and if~$f $ belongs to~$\dot B^s_{p_1,q  } \cap L^{p_1}(\G) $ and~$g$ belongs to~$\dot B^s_{p_2,q  } \cap L^{p_2}(\G) $ then~$fg \in \dot B^s_{p,q}\cap L^p (\G)$, for any~$s >0$. 
 \end{theo}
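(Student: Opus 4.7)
The plan is to prove the three statements together by an induction on $\lfloor s \rfloor$, built upon a Bony-type paraproduct decomposition $fg = T_f g + T_g f + R(f,g)$ associated with Littlewood--Paley blocks $(\Delta_j)_{j\in\Z}$ coming from the spectral calculus of the sub-Laplacian $\Delta_\G$. The base case $s\in(0,1)$ is (a homogeneous variant of) Theorem~\ref{main}. The induction step is powered by Proposition~\ref{bs+1bs}, which yields the norm equivalence
\[
\|u\|_{\dot B^{s+1}_{p,q}(\G)} \;\sim\; \sum_{i=1}^k \|X_i u\|_{\dot B^{s}_{p,q}(\G)}
\]
for $1<p<\infty$; it is precisely here that the $L^p$-continuity of local Riesz transforms is used, which explains the restriction on $p$ in statements~(2) and~(3).

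For statement~(1), I first establish the embedding $\dot B^s_{d/s,1}(\G)\hookrightarrow L^\infty(\G)$ by writing $\|f\|_{L^\infty}\le\sum_{j\in\Z}\|\Delta_j f\|_{L^\infty}$ and applying Bernstein's inequality $\|\Delta_j f\|_{L^\infty}\lesssim 2^{jd/(d/s)}\|\Delta_j f\|_{L^{d/s}}=2^{js}\|\Delta_j f\|_{L^{d/s}}$, where $d$ is the local dimension coming from $V(r)\sim r^d$ for small $r$. The algebra property then follows from the standard paraproduct bounds $\|T_f g\|_{\dot B^s_{d/s,1}}\lesssim \|f\|_{L^\infty}\|g\|_{\dot B^s_{d/s,1}}$ and its symmetric counterpart, together with a H\"older control of $R(f,g)$ using the embedding. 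Statement~(3) is proved in the same spirit at level $s\in(0,1)$ via H\"older plus Bernstein distributed between the two factors, and then each induction step applies Proposition~\ref{bs+1bs} together with the Leibniz rule
\[
X_i(fg) = (X_i f)\,g + f\,(X_i g)
\]
and the induction hypothesis at level $s-1$ to each of the two resulting products, which inherit the Besov--H\"older hypotheses with the integrability exponents $p_1, p_2$ preserved.

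The main obstacle is statement~(2), and more precisely the appearance of the fractional third index $q=(s-1)/s$. In the induction step one would like to bound $\|(X_i f)\,g\|_{\dot B^{s-1}_{p,1}}$, but applying~(2) at level $s-1$ would require an input third index $(s-2)/(s-1)\ne (s-1)/s$, so the induction does not close directly. One therefore interpolates, at the level of the dyadic Besov sum, between a low-frequency piece controlled by $\|g\|_{L^\infty}$ alone and a high-frequency piece requiring the full $\dot B^s$ norm of $g$. Balancing these two pieces against the weights $2^{j(s-1)}$ forces exactly the geometric exponent $q=(s-1)/s$ on the input while producing $q=1$ on the output. This is the ``technical'' interpolation defect of Besov spaces with different integrability indexes alluded to before the theorem statement, and it is the precise obstacle that motivates the passage to H-type groups in the following theorem, where a sharper paraproduct calculus restores the full algebra property at $q=1$.
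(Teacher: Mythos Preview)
Your plan has a structural gap: you invoke paraproduct bounds such as $\|T_f g\|_{\dot B^s_{d/s,1}}\lesssim \|f\|_{L^\infty}\|g\|_{\dot B^s_{d/s,1}}$, but these depend on a frequency-localization lemma for products (if $u$ is spectrally supported in a ring at scale $2^j$ and $v$ in a ball at much smaller scale, then $uv$ is again supported in a ring at scale $2^j$). In the paper that lemma (Lemma~\ref{loc}) is proved only for H-type groups via the explicit Bargmann Fourier transform; on a general nilpotent group no such localization is available. This is exactly why Theorem~\ref{mainnil} and Theorem~\ref{mainHtype} are separated and why the conclusions of Theorem~\ref{mainnil} are weaker. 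So the algebra property for $\dot B^s_{d/s,1}$ cannot be obtained by paraproduct here.

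The paper's route avoids paraproducts entirely. The product is controlled at level $s'\in(0,1)$ by the difference-based characterization (Proposition~\ref{crucial}) and its H\"older variant (Proposition~\ref{moregeneralresult}), yielding
\[
\|fX_ig\|_{\dot B^{s'}_{p,q}} \lesssim \|f\|_{L^{a_1}}\|X_ig\|_{\dot B^{s'}_{b_1,q}} + \|f\|_{\dot B^{s'}_{a_2,q}}\|X_ig\|_{L^{b_2}},
\]
and the induction step uses the Leibniz rule together with Proposition~\ref{bs+1bs} (global homogeneous Riesz transforms of Lohou\'e--Varopoulos, not the local ones). For statement~(1) the choice $a_2=d/(s-1)$, $b_2=d$ plus a Gagliardo--Nirenberg bound $\|X_ig\|_{L^d}\lesssim\|(-\Delta_\G)^{s/2}g\|_{L^{d/s}}^{1/s}\|g\|_{L^\infty}^{1-1/s}$ closes the estimate. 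For statement~(2) the fractional index $(s-1)/s$ does not arise from balancing low and high frequencies as you suggest, but from a single H\"older interpolation on each block,
\[
2^{js'}\|\Delta_j f\|_{L^{ps/(s-1)}}\lesssim \bigl(2^{js}\|\Delta_j f\|_{L^p}\bigr)^{(s-1)/s}\|f\|_{L^\infty}^{1/s},
\]
with $a_2=ps/(s-1)$, $b_2=ps$. For statement~(3) the paper does not simply feed the induction hypothesis: it invokes real interpolation of homogeneous Besov spaces, namely $[\dot B^0_{a,\infty};\dot B^s_{a,q}]_{(s-1)/s,1}=\dot B^{s'}_{a,1}$ and $[\dot B^0_{b,\infty};\dot B^s_{b,q}]_{1/s,1}=\dot B^1_{b,1}$, to place $f$ in $\dot B^{s'}_{a_2,q}$ and $X_ig$ in $L^{b_2}$ from the hypotheses $f\in L^{p_1}\cap\dot B^s_{p_1,q}$, $g\in L^{p_2}\cap\dot B^s_{p_2,q}$.
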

\begin{remark}
Unfortunately we are unable to recover, in the case of nilpotent groups, the full algebra property  due to the (technical) fact that Besov spaces do not interpolate very well when the integrability indexes are different. The second property in Theorem~{\rm\ref{mainnil}}  is almost an algebra property, except for   a   loss in the third (summation) index. As to the last property this time the integrability index is changed in the product. The reason for those losses will appear clearly in the proof of the   theorem.
\end{remark}

\noindent Finally, in the context of H-type groups, thanks to paraproduct techniques, one can enlarge the range of admissible spaces and prove the following result.
\begin{theo}\label{mainHtype}
Let $\G$ be an H-type group.  For every $s >0$ and~$1 \leq p,q \leq \infty$,   the spaces~$B^s_{p,q}(\G) \cap L^\infty(\G)$ and $\dot B^s_{p,q}(\G) \cap L^\infty(\G)$ are   algebras under pointwise multiplication.
\end{theo}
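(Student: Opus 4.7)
The plan is to implement Bony's paraproduct algorithm in the H-type setting. I would first set up a Littlewood-Paley decomposition via the spectral calculus of $-\Delta_\G$: fix $\varphi \in C^\infty_c((0,\infty))$ supported in $[1/2,2]$ with $\sum_{j\in\ZZ}\varphi(2^{-2j}\lambda)=1$ for every $\lambda>0$, and define the dyadic blocks $\Pi_j f := \varphi(-2^{-2j}\Delta_\G)f$ together with the low-frequency truncations $S_j f := \sum_{k<j}\Pi_k f$. Since $-\Delta_\G$ generates a heat semigroup with explicit Gaussian-type bounds in the H-type framework, each $\Pi_j$ is convolution by a kernel $\psi_j$ with good decay, and standard arguments yield the Littlewood-Paley characterization $\|f\|_{\dot B^s_{p,q}} \simeq \|(2^{js}\|\Pi_j f\|_{L^p})_j\|_{\ell^q(\ZZ)}$ (and similarly for the inhomogeneous space, after isolating $S_0 f$ in $L^p$).

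With this at hand I would decompose the pointwise product \emph{\`a la} Bony as $fg = T_f g + T_g f + R(f,g)$, where $T_f g := \sum_j S_{j-N_0} f \cdot \Pi_j g$ and $R(f,g) := \sum_{|j-k|\leq N_0} \Pi_j f \cdot \Pi_k g$ for a large fixed integer $N_0$. The two paraproduct terms are the easy part: in $T_f g$, each summand satisfies $\|S_{j-N_0}f \cdot \Pi_j g\|_{L^p} \leq \|f\|_{L^\infty}\|\Pi_j g\|_{L^p}$ and is, in a precise sense, concentrated on frequencies of order $2^j$. Combined with an almost-orthogonality lemma for sequences of such ``quasi-frequency-localized'' functions, this yields $\|T_f g\|_{\dot B^s_{p,q}} \leq C\|f\|_{L^\infty}\|g\|_{\dot B^s_{p,q}}$, and symmetrically for $T_g f$.

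The remainder $R(f,g)$ is the delicate piece, since the product $\Pi_j f \cdot \Pi_k g$ with $|j-k|\leq N_0$ is spectrally supported in $[0, 2^{j+N_0}]$ rather than near $2^j$. I would therefore apply an outer cut-off $\Pi_{j'}$ and analyse $\Pi_{j'}R(f,g) = \sum_{j\geq j'-N_0}\Pi_{j'}\bigl(\Pi_j f \cdot \widetilde\Pi_j g\bigr)$, where $\widetilde\Pi_j$ is a thickened Littlewood-Paley block. The Bernstein inequality on the H-type group furnishes a decay factor $2^{-\sigma(j-j')}$ (for any chosen $\sigma>0$) from the outer projector, and H\"older's inequality with the $L^\infty$ bound on either of the two factors gives $\|\Pi_{j'}R(f,g)\|_{L^p} \leq C 2^{-\sigma(j-j')}\|\Pi_j f\|_{L^p}\|g\|_{L^\infty}$; summation in $j$ converges thanks to $s>0$ and reconstructs the target Besov norm. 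For the inhomogeneous statement, the low-frequency block $S_0(fg)$ is handled separately through Young's inequality on $\G$.

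The main obstacle will be the almost-orthogonality lemma: in the Euclidean setting it is a triviality coming from Fourier supports, but on $\G$ multiplication by a low-frequency function does not preserve the spectral support with respect to $-\Delta_\G$, so one has to quantify ``quasi-frequency-localization'' through controlled $L^p$-bounds on iterated sub-Laplacians $(-\Delta_\G)^M h_j$ and then verify that the paraproduct summands $S_{j-N_0}f \cdot \Pi_j g$ really do satisfy such bounds. This step relies crucially on the spectral multiplier calculus and the precise heat-kernel estimates available on H-type groups, which is why the full range $s>0$, $1\leq p,q\leq\infty$ is reached here whereas Theorem~\ref{mainnil} left gaps in the nilpotent case.
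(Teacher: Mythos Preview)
Your overall strategy---the Bony decomposition $fg = T_fg + T_gf + R(f,g)$---matches the paper's, but the heart of the argument is carried out differently, and your version has a real gap.

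The paper does not rely on quasi-localization or on heat-kernel refinements. Instead it exploits what is genuinely special about H-type groups: the explicit Fourier transform via the Bargmann representation, under which $-\Delta_\G$ acts diagonally as multiplication by $4|\lambda|(2|\alpha|+\ell)$ on the basis $(F_{\alpha,\lambda})$. With this in hand one proves an \emph{exact} localization lemma (Lemma~\ref{loc}): if $f$ and $g$ have spectral support in dyadic rings at levels $2^{2m}$ and $2^{2m'}$, then $fg$ has spectral support in a ring at level $2^{2\max(m,m')}$ when $|m-m'|$ is large, and in a ball otherwise. This is the verbatim analogue of the Euclidean fact that $\mathrm{supp}\,\widehat{fg}\subset \mathrm{supp}\,\widehat f + \mathrm{supp}\,\widehat g$, and once it is available the paraproduct and remainder estimates follow exactly as in $\R^n$. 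Your claim that the gain over Theorem~\ref{mainnil} comes from ``precise heat-kernel estimates available on H-type groups'' misidentifies the mechanism: those estimates and the spectral multiplier calculus are available on arbitrary nilpotent (indeed polynomial-growth) groups, so if they sufficed the H-type hypothesis would be superfluous.

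Your treatment of the remainder also contains an error. You assert that Bernstein provides $\|\Pi_{j'}(\Pi_j f\cdot\widetilde\Pi_j g)\|_{L^p}\lesssim 2^{-\sigma(j-j')}\|\Pi_j f\|_{L^p}\|g\|_{L^\infty}$ for $j\geq j'-N_0$ and arbitrary $\sigma>0$. No such decay exists: when $j\gg j'$, the product of two high-frequency blocks may sit at arbitrarily low frequency with full amplitude (the Euclidean model is $e^{i\xi\cdot x}e^{-i\xi\cdot x}=1$). In the correct argument, once one knows that $\Pi_j f\cdot\widetilde\Pi_j g$ lies in a spectral \emph{ball} of radius $\sim 2^{2j}$ (this is precisely what Lemma~\ref{loc} supplies), one bounds $2^{j's}\|\Pi_{j'}R(f,g)\|_{L^p}$ by $\sum_{j\geq j'-N_0}2^{(j'-j)s}\cdot 2^{js}\|\Pi_j f\|_{L^p}\|g\|_{L^\infty}$, and it is the hypothesis $s>0$---not any Bernstein gain---that makes the weight $2^{(j'-j)s}$ summable and closes the estimate.
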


\noindent  Besov spaces are defined in the coming section, and Theorems~\ref{main} and~\ref{mainnil}   are proved in Sections~\ref{proofthmmain} and~\ref{proofthmmainnil} respectively.
 We   present the proof of Theorem~\ref{mainHtype} in Section~\ref{paradiff}.

\medskip

\noindent We shall write~$A \lesssim B$ if there is a universal constant~$C$ such that~$A \leq CB$. Similarly we shall write~$A \sim B$ if~$A \lesssim B$ and~$B \lesssim A$.

\bigskip

\noindent {\bf Acknowledgements. } The authors are very grateful to L. Saloff-Coste for comments on a previous version of this text. They also thank the anonymous referee for questions and suggestions which improved the presentation.

\section{Littlewood-Paley decomposition on   groups of polynomial growth, and Besov spaces}\label{lpbesov}
\setcounter{equation}{0}

This section is devoted to a presentation of the Littlewood-Paley decomposition on  groups of polynomial growth, together with some standard applications. A general approach to the Littlewood-Paley decomposition on Lie groups with polynomial growth is investigated in~\cite{furioli}. We also refer to~\cite{BGX1} or~\cite{bg} for the case of the Heisenberg group. We recall here the construction of the homogeneous and inhomogeneous decompositions. For details and proofs of the results presented in this section we refer to~\cite{chamorro}, \cite{furioli} and~\cite{hulanicki}.

\subsection{Littlewood-Paley decomposition}
We first review the dyadic decomposition constructed in~\cite{furioli}. Let $\chi \in C^{\infty}(\R)$ be an even function such that~$0 \leq \chi \leq 1$ and~$\chi =1$ on $[0,1/4]$, $\chi =0$ on~$[1,\infty[$. Define~$\psi(x)= \chi(x/4)-\chi(x)$,  so that the support of~$ \psi $ is included in~$ [-4, -1/4]\cup [1/4, 4].$ The following holds:
$$
 \forall  \tau \in \R^*,   \quad \sum_{j \in \Z}
 \psi(2^{-2j}\tau)  =  1 
\quad \mbox{and} \quad
 \chi (\tau) + \sum_{j \geq 0} \psi(2^{-2j}\tau) = 1, 
\quad  \forall \tau \in 
\R.
$$
Introduce the spectral decomposition of the hypo-elliptic Laplacian
$$-\Delta_{\mathbb G} = \int_0^\infty \lambda dE_\lambda .$$
Then we have
$$\chi (-\Delta_{\mathbb G})=\int_0^\infty \chi(\lambda) dE_\lambda \quad \mbox{and} \quad \psi(-2^{-2j} \Delta_{\mathbb G})=\int_0^\infty \psi(2^{-2j}\lambda ) dE_\lambda . $$
We then define for $j \in \mathbb N$ the operators 
$$S_0 f= \chi(-\Delta_{\mathbb G})f \quad \mbox{and} \quad\Delta_j f= \psi(-2^{-2j} \Delta_{\mathbb G}) f. $$
The homogeneous  Littlewood-Paley decomposition of $f$ in $\mathcal S'(\mathbb G)$ is~$f= \sum_{j \in \Z} \Delta_j f,  $
while the inhomogeneous  one is~$f= S_0f + \sum_{j=0}^\infty \Delta_j f . $ 
\begin{theo}[\cite{furioli}]
Let $\mathbb G$ be a Lie group with polynomial growth and~$p \in (1,\infty)$. Then~$u$ belongs to~$  L^p(\mathbb G)$ if and only if $S_0 u$ and~$\sqrt{\sum_{j=0}^\infty |\Delta_j u |^2}$ belong  to~$L^p(\mathbb  G)$. Moreover, we have  
$\displaystyle \|u\|_{L^p(\mathbb G)} \sim \|S_0u\|_{L^p(\mathbb G)}+\Bigl \|\bigl({\sum_{j=0}^\infty |\Delta_j u |^2}\bigr)^\frac12\Bigr \|_{L^p(\mathbb G)}. $
\end{theo}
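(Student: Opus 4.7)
The statement is the standard Littlewood--Paley square function characterization of $L^p$ in the setting of polynomial growth Lie groups. My plan would be to split the equivalence into two inequalities and treat each by a spectral multiplier / Khintchine argument, with the Gaussian heat kernel estimates of Varopoulos--Saloff-Coste--Coulhon as the analytic backbone.

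First I would note that on $\G$ the heat semigroup $e^{t\Delta_\G}$ is given by a kernel $p_t$ satisfying Gaussian upper and lower bounds together with Gaussian estimates for its iterated $X_i$-derivatives; this is classical on groups of polynomial growth. Combined with a Mellin-type writing $\psi(-2^{-2j}\Delta_\G) = \int_0^\infty \widetilde\psi(t) \, e^{t\cdot 2^{2j}\Delta_\G}\,dt$ with $\widetilde\psi$ of fast decay, one sees that each $\Delta_j$ and $S_0$ is a convolution operator whose kernel obeys the size and regularity bounds of a Calderón--Zygmund kernel on the space of homogeneous type $(\G,d,dx)$ at scale $2^{-j}$. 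From this, a general Hörmander type spectral multiplier theorem (in the form proved by Alexopoulos / Hebisch for polynomial growth groups, or deducible from Coifman--Weiss) yields $L^p$ boundedness, for $1<p<\infty$, of every operator $m(-\Delta_\G)$ whose symbol $m$ satisfies a Mikhlin-type condition.

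Next I would prove the ``$\Rightarrow$'' inequality $\|S_0 u\|_p + \bigl\|(\sum_j |\Delta_j u|^2)^{1/2}\bigr\|_p \lesssim \|u\|_p$ by the Rademacher--Khintchine trick: if $(\varepsilon_j)$ are independent $\pm 1$ random signs, Khintchine gives
\begin{equation*}
\Bigl\|\bigl(\sum_{j\ge 0} |\Delta_j u|^2\bigr)^{\frac 12}\Bigr\|_{L^p(\G)}
\sim \Bigl(\mathbb E \,\Bigl\|\sum_{j\ge 0} \varepsilon_j \Delta_j u\Bigr\|_{L^p(\G)}^p\Bigr)^{\frac 1p},
\end{equation*}
so it suffices to bound $\|T_\varepsilon u\|_p$ uniformly over $\varepsilon$, where $T_\varepsilon = m_\varepsilon(-\Delta_\G)$ with $m_\varepsilon(\tau)=\sum \varepsilon_j \psi(2^{-2j}\tau)$. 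Since $m_\varepsilon$ is uniformly bounded and satisfies a Mikhlin-type estimate $|\tau^k m_\varepsilon^{(k)}(\tau)| \lesssim 1$ with constants independent of the signs, the spectral multiplier theorem alluded to above gives the uniform bound; summed with the easy estimate $\|S_0 u\|_p \lesssim \|u\|_p$ (which is the case $j=0$ of the same reasoning) this yields the direct inequality.

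For the reverse inequality $\|u\|_p \lesssim \|S_0 u\|_p + \bigl\|(\sum_j |\Delta_j u|^2)^{1/2}\bigr\|_p$, I would argue by duality. Using the partition of unity $1 = \chi(\tau) + \sum_{j\ge 0}\psi(2^{-2j}\tau)$ together with a fattened version $\widetilde\Delta_j$ such that $\widetilde\Delta_j \Delta_j = \Delta_j$, one writes for any $v \in L^{p'}(\G)$
\begin{equation*}
\langle u, v \rangle = \langle S_0 u, S_0' v\rangle + \sum_{j\ge 0} \langle \Delta_j u, \widetilde\Delta_j v\rangle,
\end{equation*}
then applies Cauchy--Schwarz in $j$ inside the integral and Hölder in $x$, reducing everything to the direct inequality for $v$ in $L^{p'}$, which holds since $p'\in (1,\infty)$ as well.

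The main obstacle is verifying that the spectral multiplier $m_\varepsilon$ really satisfies the hypotheses of the Mikhlin--Hörmander theorem in this geometric setting, uniformly in the signs $\varepsilon_j$; everything else (Khintchine, duality, the fattening trick) is soft. This is why the restriction $1<p<\infty$ is sharp for this characterization: the argument passes through Calderón--Zygmund / spectral multiplier theory which fails at the endpoints.
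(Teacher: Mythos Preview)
The paper does not prove this theorem: it is quoted from \cite{furioli} (Furioli--Melzi--Veneruso) and no argument is given here. Your sketch is the standard route and matches in spirit what is done in that reference: one first invokes a H\"ormander--Mikhlin spectral multiplier theorem for $-\Delta_\G$ on groups of polynomial growth (this is the substantial analytic input, and is precisely what \cite{furioli} builds on, using the heat kernel estimates of \cite{coulhonscv} and the functional calculus of \cite{hulanicki}/Alexopoulos), then deduces the square function bound via Khintchine and randomized signs, and finally obtains the reverse inequality by duality using a fattened partition of unity. There is nothing to correct; just be aware that the genuine work lies in the multiplier theorem itself, which you are (rightly) taking as a black box.
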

\noindent In the following we shall denote by $\Psi_j$ the kernel of the operator~$\psi(-2^{-2j} \Delta_{\mathbb G})$. One can show that~$\Psi_j$ is mean free (see Corollary 5.1 of~\cite{chamorropaper} for Carnot groups, and Theorem~7.1.2 of~\cite{chamorro} for an extension to groups of polynomial growth). In the context of Carnot groups,~$\Psi_j$ satisfies the dilation property \noindent 
$$
\Psi_j(x)= 2^{Qj}\Psi_0(2^j x). 
$$
In the more general context of groups of polynomial growth, this does not hold but one has nevertheless the following important estimates: let~$\alpha \in \N$ and~$\displaystyle I \in \bigcup_{\beta \in \N}\{1,\dots,k\}^\beta$ be given, as well as~$p \in [1,\infty]$. The following result is due to~\cite{furioli}:
\begin{equation}\label{estimatekernel}
\forall j \geq 0, \quad \| (1+|\cdot|)^\alpha X^I \Psi_j \|_{L^p(\G)} \lesssim 2^{j(\frac d{p'} + |I|)},
\end{equation}
where~$1/p + 1/p' = 1$. We have denoted~$X^I = X_{i_1} \dots X_{i_\beta}$ and~$|I| = \beta$.
 Moreover as proved in~\cite{chamorro}, Theorem 7.1.2, one has
 \begin{equation}\label{secondestimatekernel}
\forall j  \in \Z, \quad \|   X_i  \Psi_j \|_{L^1(\G)} \lesssim 2^{j }.
\end{equation}
 Finally putting together   classical estimates on the heat kernel (see~\cite{crt} or~\cite{coulhonscv} for instance) and the  methods of~\cite{furioli} allows to write that for any~$\alpha \geq 0$,
 \begin{equation}\label{thirdestimatekernel}
\forall j  \in \Z, \quad \bigl\|  |\cdot|^\alpha  \Psi_j \bigr\|_{L^1(\G)}   \lesssim 2^{j \alpha}.
\end{equation}

 \subsection{Besov spaces}

As a standard application of the Littlewood-Paley decomposition, one can define (inhomogeneous) Besov spaces on Lie groups with polynomial volume growth in the following way: let $s \in \R$ and $1 \leq p\leq +\infty$ and~$0 < q \leq \infty$, then $B^s_{p,q}(\mathbb G) $ is the space
$$
  \left \{ f \in \mathcal S'(\mathbb G), \,\|f\|_{  B^s_{p,q}(\mathbb G) } =  \|S_0f\|_{L^p(\mathbb G)}+ \Big ( \sum_{j=0}^\infty (2^{js} \|\Delta_jf \|_{L^p(\mathbb G)})^q\Big )^{1/q}< \infty \right \}$$
with the obvious adaptation if~$q = \infty$. When~$p=q=2$ one recovers the usual Sobolev spaces (see for instance~\cite{bg} for a proof in the case of the Heisenberg group). Note that when~$s>0$ one sees easily that~$\|S_0f\|_{L^p(\mathbb G)}$ may be replaced by~$\| f\|_{L^p(\mathbb G)}$. 
Using   Bernstein inequalities (Proposition~4.2 of~\cite{furioli}) one gets immediately that if~$0<s$ then
\begin{equation}\label{besovembedding}
p_1 \leq p_2 \Longrightarrow B^{s+\frac d{p_1}-\frac d{p_2}}_{p_1,q} \cap L^{p_2} \hookrightarrow B^{s}_{p_2,q} \cap L^{p_1}
\end{equation}
where recall that~$d$ is the local dimension of~$\G$.

\medskip
\noindent One can also define the homogeneous counterpart of the above norm: 
$$\|f\|_{\dot B^s_{p,q}(\mathbb G) } = \Big ( \sum_{j \in \Z} (2^{js} \|\Delta_j f\|_{L^p(\mathbb G)})^q\Big )^{1/q} 
$$
but proving that this does provide a (quasi)-Banach space is not an easy matter, and is actually not true in general, even in the euclidean case (see for instance~\cite[Chapter~2]{BCD} for comments on that subject). To recover a Banach space in the context of Carnot groups, the homogeneous space $\dot B^s_{p,q}(\mathbb G)$ can be defined as     the set of functions in $\mathcal S'(\mathbb G)$ modulo polynomials, such that the above norm is finite  (see~\cite{fuhr}). In the present study however this will not be an issue, even if the group is not stratified: we define~$\dot B^s_{p,q}(\mathbb G) $  as the completion   for the above norm of the set of smooth functions such that~$\Delta_j f \to 0$ as~$j \to -\infty$, and we shall always be considering the intersection of~$\dot B^s_{p,q}(\mathbb G) $    with a Banach space (such as~$L^\infty$).

\smallskip
\noindent Note that Bernstein inequalities imply as in~(\ref{besovembedding}) that
$$
p_1 \leq p_2 \Longrightarrow\dot B^{s+\frac d{p_1}-\frac d{p_2}}_{p_1,q}\hookrightarrow\dot B^{s}_{p_2,q} .
$$

\medskip
\noindent 
    Besov spaces are often rather defined using the heat flow (the advantage being that it does not require the Littlewood-Paley machinery). 
In \cite{furioli}, the authors prove that if~$s \in \R$, then $f \in B^s_{p,q}(\mathbb G)$ is equivalent to: for all~$t >0$, the function~$e^{t\Delta_{\mathbb G}}f$ belongs to~$L^p(\mathbb G)$ and 
\begin{equation}\label{charheat}
\Big (\int_0^1 t^{-sq/2}\|(t(-\Delta_\G))^{m/2}e^{t\Delta_\G }f\|^q_{L^p(\mathbb G)}\frac{dt}{t} \Big )^{1/q} < \infty
\end{equation} for $m \geq 0$ greater than~$s$. 
We shall not be using this characterization here.

\section{Proof of Theorem \ref{main}}\label{proofthmmain}
\setcounter{equation}{0}
\subsection{The case $s \in (0,1)$}
We start by dealing with the case~$s \in (0,1)$, and  use an idea of~\cite{crt}   which consists in representing the norm on Besov spaces by suitable functionals. More precisely, we introduce the following  functional (note that it differs slightly from that used in \cite{crt}), writing $\tau_{w}f(w')=f(w'w)$:
$$\mathcal S_{s,p} f(w)=  \frac{\|\tau_{w}f-f\|_{L^p(\G)}}{|w|^s}   \cdotp$$
\begin{pro}\label{crucial}
Let $\mathbb G$ be a  Lie group with polynomial growth. Then for any $s \in (0,1)$ and $p,q \in [1,+\infty]$, we have 
$$\|f\|_{  B^s_{p,q}(\G)} \sim \|f\|_{L^p} +  \|\mathcal S_{s,p} f\|_{L^q(\G, \frac{{\mathbf 1}_{|y| \leq 1 } dy}{V(|y|)})}
$$
and
$$
\|f\|_{\dot  B^s_{p,q}(\G)} \sim   \|\mathcal S_{s,p} f\|_{L^q(\G, \frac{ dy}{V(|y|)})} .
$$
\end{pro}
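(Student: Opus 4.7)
The plan is to view Proposition~\ref{crucial} as the Lie-group analog of the classical characterization of Besov spaces by finite differences (as in~\cite{saloffcoste}), and to prove both the homogeneous and inhomogeneous statements in parallel. A first observation to exploit is that, since $V$ is polynomial at both scales in the polynomial-growth setting, any dyadic annulus $\{2^{-k-1}\leq |y|\leq 2^{-k}\}$ has mass of order $1$ for the measure $dy/V(|y|)$, uniformly in $k\in\Z$; hence for any dyadically constant function $F$ one has
$$\int_{\G} F(|y|)\,\frac{dy}{V(|y|)} \sim \sum_{k\in\Z} F(2^{-k}),$$
which reduces the claimed equivalence to a discrete comparison between the $\ell^q$-sequences $(2^{js}\|\Delta_j f\|_{L^p})_j$ and $(\mathcal S_{s,p} f(2^{-k}))_k$.

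For the direction $\|\mathcal S_{s,p}f\|_{L^q(dy/V(|y|))} \lesssim \|f\|_{\dot B^s_{p,q}}$, I would decompose $\tau_{y}f - f = \sum_{j} (\tau_{y}\Delta_j f - \Delta_j f)$ and treat the regimes $2^j|y|\leq 1$ and $2^j|y|>1$ separately. On the low frequencies, running an admissible Carnot--Carath\'eodory path $\ell:[0,1]\to\G$ from $e$ to $y$ of length $\sim|y|$ and exploiting the left-invariance of the $X_i$ yields $\tau_y \Delta_j f(x)-\Delta_j f(x) = \int_0^1 \sum_i c_i(t) (X_i\Delta_j f)(x\ell(t))\,dt$; Bernstein's inequality (a direct consequence of~(\ref{secondestimatekernel})) then gives $\|\tau_y\Delta_j f - \Delta_j f\|_{L^p} \lesssim 2^j|y|\,\|\Delta_j f\|_{L^p}$. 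On the high frequencies the trivial bound $\|\tau_y\Delta_j f - \Delta_j f\|_{L^p}\leq 2\|\Delta_j f\|_{L^p}$ is enough. Setting $a_j = 2^{js}\|\Delta_j f\|_{L^p}$ and restricting to $|y|\sim 2^{-k}$, these bounds assemble into
$$\mathcal S_{s,p}f(y) \lesssim \sum_{j\leq k} 2^{-(k-j)(1-s)} a_j + \sum_{j>k} 2^{-(j-k)s} a_j.$$
Since $s\in(0,1)$ the discrete convolution kernel lies in $\ell^1(\Z)$, and Young's inequality for sequences closes this half.

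For the converse direction $\|f\|_{\dot B^s_{p,q}} \lesssim \|\mathcal S_{s,p}f\|_{L^q(dy/V(|y|))}$, I would use the vanishing mean of $\Psi_j$ to rewrite $\Delta_j f(x) = \int (f(xy)-f(x))\widetilde\Psi_j(y)\,dy$ with $\widetilde\Psi_j(y)=\Psi_j(y^{-1})$, from which
$$2^{js}\|\Delta_j f\|_{L^p} \leq \int \mathcal S_{s,p}f(y)\,\phi_j(y)\,\frac{dy}{V(|y|)}, \qquad \phi_j(y) := (2^j|y|)^s\, V(|y|)\, |\widetilde\Psi_j(y)|.$$
Estimate~(\ref{thirdestimatekernel}) applied with $\alpha=s$ gives $\|\phi_j\|_{L^1(dy/V(|y|))}=2^{js}\||\cdot|^s\Psi_j\|_{L^1}\lesssim 1$, uniformly in $j$, so Jensen's inequality applied to the associated probability measure produces $(2^{js}\|\Delta_j f\|_{L^p})^q \lesssim \int \mathcal S_{s,p}f(y)^q \phi_j(y)\,dy/V(|y|)$. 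Summing in $j$, it remains to check that $\sum_j \phi_j(y) \lesssim 1$, which follows from scale-adapted pointwise bounds on $\Psi_j$ inherited from the Gaussian heat-kernel estimates behind~(\ref{estimatekernel}): at a given $y$, $\phi_j$ concentrates on $2^j|y|\sim 1$ and decays geometrically away from that scale.

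The inhomogeneous statement is obtained along exactly the same lines, the cosmetic differences being that the integral in $y$ is truncated to $|y|\leq 1$, only indices $j\geq 0$ appear, and the low-frequency term $\|S_0 f\|_{L^p}$ is controlled separately by the $L^p$ norm of $f$. The main technical obstacle will be the uniform pointwise-in-$y$ control of $\sum_j \phi_j$, which plays the role that the Plancherel identity $\sum_j |\widehat\psi(2^{-j}\xi)|^2\equiv 1$ would play on $\R^d$: outside the Carnot setting, where $\Psi_j$ is not a pure dilate of $\Psi_0$, this control must be extracted from the Gaussian-type heat-kernel estimates available for groups of polynomial growth via~\cite{furioli} and~\cite{chamorro}.
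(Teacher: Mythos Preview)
Your proposal is correct and follows the same route as the paper: the forward direction via the admissible-path argument split at the critical scale $2^j|y|\sim 1$, and the converse via the mean-zero property of $\Psi_j$ combined with~(\ref{thirdestimatekernel}). Your packaging of the forward estimate through a discrete Young convolution (in place of the paper's explicit H\"older-plus-Fubini) and your explicit treatment of the converse for $q<\infty$ (where the paper only writes ``similar though a little more technical'') are minor refinements, and you correctly flag the pointwise bound $\sum_j\phi_j\lesssim 1$ as the one extra input to be drawn from the scale-adapted Gaussian heat-kernel estimates.
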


\noindent Once Proposition \ref{crucial} is proved, the algebra property follows immediately in the case when~$s \in (0,1)$. Indeed, let $f,g$ belong to the space~$ (  B^s_{p,q} \cap L^{\infty}) (\mathbb G)$ for $s \in (0,1)$. It is easy to see that 
\begin{equation}\label{holderlinfty}
\mathcal S_{s,p}(fg) \leq \|f\|_{L^\infty} \: \mathcal S_{s,p} g+ \|g\|_{L^\infty}  \:\mathcal S_{s,p} f, 
\end{equation}
hence the result using the equivalence in Proposition \ref{crucial}.  The same holds in the homogeneous case.

\begin{remark}\label{holdergeneral}
One can   extend~(\ref{holderlinfty})   to the following, with~$1/a_i + 1/b_i = 1/p$:
$$
\mathcal S_{s,p}(fg) \leq    \|f\|_{L^{a_1}}   \mathcal S_{s,b_1} g + \|g\|_{L^{a_2}} \mathcal S_{s,b_2}  f\ .$$
\end{remark}
\medskip

\noindent We now prove Proposition \ref{crucial}. Note that 
this result was already proved in~\cite{saloffcoste} using the characterization~(\ref{charheat}). We choose to present a proof using the Littlewood-Paley definition here, which is inspired by the proof of the euclidean case in~\cite{BCD} for instance.
We need to prove that for $s\in (0,1)$
$$
\sum_{j \in \Z} (2^{js} \|\Delta_j f\|_{L^p(\mathbb G)})^q \sim \|f\|_{L^p} ^q+ \int_{\mathbb G} {\mathbf 1}_{|w| \leq 1 } \frac{\|\tau_{w}f-f\|^q_{L^p(\mathbb G)} }{V(|w|)|w|^{sq}}dw 
$$
with the obvious modification if~$q = \infty$.
 Compared to the euclidean case, one is missing the usual dilation property, which will be replaced by estimate~(\ref{estimatekernel}). The classical proof also uses a Taylor expansion at order one, which we must adapt to our context in order to use only horizontal vector fields (which alone appear in~(\ref{estimatekernel})). 
 Let us start by  noticing that
 $$
  \|f\|_{L^p}^q \leq \sum_{j \leq 0} (2^{js} \|\Delta_j f\|_{L^p(\mathbb G)})^q  \, .
 $$ 
 Next let us bound the quantity~$\|\tau_{w} \Delta_j f-\Delta_j f \|_{L^p(\G)}$ .
Recalling that~$\displaystyle \Delta_j = \sum_{| j'-j | \leq 1} \Delta_j \Delta_{j'},$
we have
$$
\tau_{w} \Delta_j f-\Delta_j f =   \sum_{| j'-j | \leq 1} \Delta_{j'} f \star \bigl( \tau_{w} \Psi_j-\Psi_j \bigr),
$$
where~$\Psi_j$ is the kernel associated with~$\psi(2^{-2j} \Delta_{\mathbb G})$. It follows by Young's inequality that
$$
\|\tau_{w} \Delta_j f-\Delta_j f\|_{L^p(\G)} \leq   \sum_{| j'-j | \leq 1} \| \Delta_{j'} f\|_{L^p} \| \tau_{w} \Psi_j-\Psi_j \|_{L^1}.
$$
Now let us estimate~$\| \tau_{w} \Psi_j-\Psi_j \|_{L^1}$. We have
\begin{eqnarray*}
 ( \tau_{w} \Psi_j-\Psi_j )(x)&=& \int_0^1 \frac d{ds} \Psi_j(x \varphi (s)) \: ds \\
&=& \sum_{\ell = 1}^k  \int_0^1 c_\ell (s)  (X_\ell (x \varphi (s))  \Psi_j)(x \varphi (s))  \: ds ,
\end{eqnarray*}
where~$\varphi$ is an admissible path linking~$e$ to~$w$. It follows that
\begin{eqnarray*}
 \|\tau_{w} \Psi_j - \Psi_j\|_{L^1} &\leq &\int_\G \sum_{\ell = 1}^k  \int_0^1 | c_\ell (s) | \:  \bigl| (X_\ell (x \varphi (s))  \Psi_j)(x \varphi (s)) \bigr|  \: ds dx \\
 &\leq &\sum_{\ell = 1}^k\int_0^1 | c_\ell (s) |  \: ds \:\|X_\ell \Psi_j\|_{L^1} 
 \end{eqnarray*}
 by the Fubini theorem and a change of variables. Using~(\ref{secondestimatekernel})
we get
$$
\forall j\in \N, \quad   \|\tau_{w} \Psi_j - \Psi_j\|_{L^1} \lesssim 2^{j} \sum_{\ell = 1}^k\int_0^1 | c_\ell (s) | \: ds
  $$
  so by definition of~$|w|$ and by the Cauchy-Schwarz inequality we find  
  $$
  \forall j \in \N, \quad  \|\tau_{w} \Psi_j - \Psi_j\|_{L^1} \lesssim 2^{j}  |w|.
  $$
 This  implies that there is a sequence~$(c_j) $ in the unit ball of~$ \ell^q$ such that
  \begin{equation}\label{first}
\forall j\in \N, \quad  \|\tau_{w} \Delta_j f-\Delta_j f \|_{L^p(\G)} \lesssim c_j |w| 2^{j(1-s)} \|f\|_{  B^s_{p,q}(\G)}.
\end{equation}
 On the other hand  one has of course
\begin{equation}\label{second}
 \|\tau_{w} \Delta_j f-\Delta_j f \|_{L^p(\G)} \lesssim c_j 2^{-js} \|f\|_{  B^s_{p,q}(\G)}.
\end{equation}
Now let us define~$j_w \in \Z$ such that $\frac{1}{|w|} \leq 2^{j_w} \leq \frac{2}{|w|}\cdotp$ 
Then
using~(\ref{first}) for low frequencies   and~(\ref{second}) for high frequencies allows to write
$$
\|\tau_{w}f-f\|_{L^p(\G)} \lesssim  \|f\|_{  B^s_{p,q}(\G)} \left(
\sum_{j \leq j_w}c_j 2^{j(1-s)}|w| + \sum_{j > j_w}c_j 2^{ -js}
\right).
$$
Let us first consider the case~$q = \infty$. Then one finds directly that
$$
\|\tau_{w}f-f\|_{L^p(\G)} \lesssim |w|^s  \|f\|_{  B^s_{p,q}(\G)}
$$
which proves one side of the equivalence. The case~$q < \infty$ is slightly more technical but is very close to the euclidean case. We include it here for sake of completeness. 
We have that 
$$\Big \|\frac{\|\tau_{w}f -f\|_{L^p}}{|w|^s}\Big\|^q_{L^q(\G,\frac{ {\mathbf 1}_{|w| \leq 1 } }{V(|w|)})} \lesssim 2^q \|f\|_{\dot B^s_{p,q}}^q (I_1+I_2)$$
where 
$$I_1=\int_\G {\mathbf 1}_{|w| \leq 1 }  \Big ( \sum_{j \leq j_w} c_j 2^{j(1-s)} \Big )^q \frac{|w|^{q(1-s)}dw}{V(|w|)} \quad \mbox{and}$$ 
$$I_2=\int_\G  {\mathbf 1}_{|w| \leq 1 } \Big ( \sum_{j > j_w} c_j 2^{-js} \Big )^q \frac{|w|^{-qs}dw}{V(|w|)} \cdotp $$
H\"older's inequality with the weight $2^{j(1-s)}$ and the definition of $j_w$ imply   
$$\Big (\sum_{j \leq j_w} c_{j}2^{j(1-s)} \Big )^q \lesssim |w|^{-(1-s)(q-1)} \sum_{j \leq j_w} c^q_{j}2^{j(1-s)}. $$
By Fubini's theorem, we deduce that 
$$I_1 \lesssim \sum_{j \in \N}\int_{B(0,2^{-j+1})} |w|^{1-s}\frac{dw}{V(|w|)} 2^{j(1-s)}c_j^q \lesssim 1, $$
since~$\|(c_j)\|_{\ell^q} \leq 1$. 
The estimate on $I_2$ is very similar.  Note that it is crucial here that~$s \in (0,1)$.

\noindent The converse inequality is easy to prove and only depends on the fact that the mean value of~$\Psi_j$ is zero. We write indeed   
$$
  \Delta_j f (w) = \int \tau_v f(w) \Psi_j (v) \: dv   =  \int( \tau_v f(w) - f(w)) \Psi_j (v) \: dv 
$$
 so that  
 $$
 2^{js} \|  \Delta_j f \|_{L^p} \leq \sup_{v \in \G} \frac {\| \tau_v f-f\|_{L^p}} { |v|^s}  \int 2^{js} |v|^s |\Psi_j (v) | \: dv 
.
 $$
Then~(\ref{thirdestimatekernel})  implies that
$$
 2^{js} \|  \Delta_j f \|_{L^p} \leq \sup_{v \in \G} \frac {\| \tau_v f-f\|_{L^p}} { |v|^s}  \, \cdotp
$$
Since
$$
 \sup_{|v| \geq 1  } \frac {\| \tau_v f-f\|_{L^p}} { |v|^s} \leq  2 \|f\|_{L^p}  \, ,
$$
we get finally
$$
\sup_{j  \in \Z}  2^{js} \|  \Delta_j f \|_{L^p} \leq \sup_{|v| \leq 1 } \frac {\| \tau_v f-f\|_{L^p}} { |v|^s} + 2  \|f\|_{L^p} \,  ,
$$
the result follows in the case~$q = \infty$.
The case~$q < \infty$ is similar though a little more technical, as above. 

\noindent The homogeneous case is dealt with in a similar fashion. We leave the details to the reader. 
 This proves Proposition \ref{crucial}.  \qed

\medskip

\noindent Using Remark~\ref{holdergeneral}, the same proof provides the following result, which will be useful in the next section.
  \begin{pro}\label{moregeneralresult}
 Let $\G$ be a Lie group with polynomial volume growth. 
 
 \noindent For every~$0 < s < 1$ and $1 \leq p,q \leq \infty$ one has, writing~$1/p = 1/a_i + 1/b_i$
 $$
 \|fg\|_{B^s_{p,q}} \leq \|f\|_{L^{a_1}} \|g\|_{B^s_{b_1,q}} +  \|g\|_{L^{a_2}} \|f\|_{B^s_{b_2,q}}
  $$ 
 and
 $$
 \|fg\|_{\dot B^s_{p,q}} \leq \|f\|_{L^{a_1}} \|g\|_{\dot B^s_{b_1,q}} +  \|g\|_{L^{a_2}} \|f\|_{\dot B^s_{b_2,q}}.
 $$ \end{pro}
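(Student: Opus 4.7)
The statement is a direct refinement of the algebra property already established for $0<s<1$, and the proof should amount to re-running the two-line argument given for that algebra property, but keeping track of distinct integrability exponents. The two ingredients are already in place: the equivalence of norms provided by Proposition~\ref{crucial}, and the pointwise bilinear estimate stated in Remark~\ref{holdergeneral},
$$\mathcal{S}_{s,p}(fg)(w)\leq \|f\|_{L^{a_1}}\,\mathcal{S}_{s,b_1}g(w)+\|g\|_{L^{a_2}}\,\mathcal{S}_{s,b_2}f(w),$$
which follows, as in the derivation of~\eqref{holderlinfty}, from the decomposition
$$\tau_w(fg)-fg=(\tau_wf)(\tau_wg-g)+(\tau_wf-f)\,g,$$
H\"older's inequality with $1/p=1/a_i+1/b_i$, and left-invariance of the Haar measure (so that $\|\tau_wf\|_{L^{a_1}}=\|f\|_{L^{a_1}}$).

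For the homogeneous estimate, I would start by invoking Proposition~\ref{crucial} to replace $\|fg\|_{\dot B^s_{p,q}}$ by $\|\mathcal{S}_{s,p}(fg)\|_{L^q(\G,dy/V(|y|))}$. Plugging in the pointwise bound above and using the triangle inequality in this weighted $L^q$ space yields
$$\|\mathcal{S}_{s,p}(fg)\|_{L^q(\G,dy/V(|y|))}\leq \|f\|_{L^{a_1}}\|\mathcal{S}_{s,b_1}g\|_{L^q(\G,dy/V(|y|))}+\|g\|_{L^{a_2}}\|\mathcal{S}_{s,b_2}f\|_{L^q(\G,dy/V(|y|))}.$$
Applying Proposition~\ref{crucial} once more, now with the exponents $b_1$ and $b_2$, the two factors on the right are equivalent to $\|g\|_{\dot B^s_{b_1,q}}$ and $\|f\|_{\dot B^s_{b_2,q}}$ respectively, which is the claimed homogeneous inequality.

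The inhomogeneous case is handled in exactly the same way for the integral part, restricted to $|y|\leq 1$, yielding the required Besov seminorm contribution. It only remains to control the additional $\|fg\|_{L^p}$ piece that appears in the characterization of the inhomogeneous norm (recall that for $s>0$ the term $\|S_0 f\|_{L^p}$ may be replaced by $\|f\|_{L^p}$). For this I would use classical H\"older with $1/p=1/a_1+1/b_1$ to get $\|fg\|_{L^p}\leq \|f\|_{L^{a_1}}\|g\|_{L^{b_1}}$ and then the trivial embedding $\|g\|_{L^{b_1}}\lesssim \|g\|_{B^s_{b_1,q}}$, valid because $s>0$ implies that the $L^{b_1}$ norm is controlled by the $B^s_{b_1,q}$ norm. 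Combining the two estimates completes the proof.

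There is no substantive obstacle here: since the bilinear splitting of $\mathcal{S}_{s,p}(fg)$ takes place pointwise, distributing it through the weighted $L^q$ norm is immediate and the functional characterization of Proposition~\ref{crucial} then absorbs everything. The only point deserving attention is simply bookkeeping the two admissible splittings $1/p=1/a_1+1/b_1=1/a_2+1/b_2$ so that the H\"older exponents match on each term, both in the pointwise estimate for $\mathcal{S}_{s,p}(fg)$ and in the control of $\|fg\|_{L^p}$ in the inhomogeneous setting.
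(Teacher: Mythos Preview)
Your proposal is correct and follows exactly the approach the paper uses: the paper simply notes that Proposition~\ref{moregeneralresult} follows from running the same argument as in the case $s\in(0,1)$ of Theorem~\ref{main}, replacing the pointwise estimate~\eqref{holderlinfty} by the H\"older variant of Remark~\ref{holdergeneral}, together with the characterization of Proposition~\ref{crucial}. Your write-up makes explicit the control of the additional $\|fg\|_{L^p}$ term in the inhomogeneous case, which the paper leaves implicit.
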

  
 \subsection{The case $s\geq 1$ (inhomogeneous spaces)}
 We shall first deal with the case when~$s$ is not an integer. We use the well-known fact that the ``local Riesz transforms"~$({\rm Id}-\Delta_\G)^{\frac {m-1}2} X_i  ({\rm Id}-\Delta_\G)^{-\frac m2 }$ are bounded over~$L^p(\G)$  for~$1<p<\infty$ (see for instance~\cite{dungey}).
 This implies easily (see the next section where the same result is proved in the more difficult homogeneous case) that
 $$
 f \in B^{s+1}_{p,q} \iff f \in B^{s}_{p,q} \quad \mbox{and} \quad X_i f \in B^{s}_{p,q} \quad \forall i =1,\dots,k.
 $$ 
We can then  follow the lines of~\cite{crt}, by writing 
$
\|fg\|_{B^{s+1}_{p,q}} \sim \|fg\|_{B^s_{p,q}} + \sum_{i = 1}^k \|X_i (fg)\|_{B^s_{p,q}} 
$
and by arguing by induction: let us detail the case~$s = 1+s'$ with~$0<s'<1$.
 On the one hand we know that  for all~$1 \leq p,q \leq \infty$   and if~$1/a_i + 1/b_i = 1/p$,
$$
\|fg\|_{B^{s'}_{p,q}} \lesssim \|f\|_{L^{a_1}} \| g\|_{B^{s'}_{b_1,q}} +   \|g\|_{L^{a_2}} \| f\|_{B^{s'}_{b_2,q}} .
$$
Then we write, by the Leibniz rule,
$$
 \|X_i (fg)\|_{B^{s'}_{p,q}}  \leq  \|f X_i g\|_{B^{s'}_{p,q}}  +  \|gX_i f\|_{B^{s'}_{p,q}} 
$$
and we have,   by Proposition~\ref{moregeneralresult},
\begin{equation}\label{estimatefXig}
 \|f X_i g\|_{B^{s'}_{p,q}}  \lesssim   \|f\|_{L^{a_1}}  \|X_i g\|_{B^{s'}_{b_1,q}} + \| f\|_{B^{s'}_{a_2,q}}  \|X_ig\|_{L^{b_2}} .
\end{equation}
The estimate on~$gX_i f$ in~$B^{s'}_{p,q}$ is similar so we shall not write the details for that term.

\noindent
The first term on the right-hand side of~(\ref{estimatefXig}) is very easy to estimate since
$$
 \|X_i g\|_{B^{s'}_{b_1,q}} \lesssim \|g\|_{B^{s}_{b_1,q}}.
$$
So let us turn to the second term.
Let us first  estimate~$f$ in~$B^{s'}_{a_2,q}$. We have clearly, since~$s'\leq s$,
$$
 \| f\|_{B^{s'}_{a_2,q}}  \lesssim \|f\|_{B^{s}_{a_2,q}} .
$$
Now let us turn to the estimate of~$X_i g$ in~$L^{b_2}$, choosing~$1<b_2<\infty $. We   use the fact that
$$
\begin{aligned}
\|X_i g\|_{L^{b_2}}& \lesssim \|X_i ({\rm Id}-\Delta)^{-\frac12} ({\rm Id}-\Delta)^{\frac12} g \|_{L^{b_2}}  \\
& \lesssim \| ({\rm Id}-\Delta)^{\frac12} g \|_{L^{b_2}}
\end{aligned}
$$
by the continuity of the local Riesz transforms.  Since~$[ \Delta_j, \Delta_\G] = 0$,  Bernstein's lemma (see Proposition~4.3 of~\cite{furioli}) implies  
 $$
 \|\Delta_j ({\rm Id}-\Delta)^{\frac12} g \|_{L^{b_2}}  \lesssim 2^{ j} \|\Delta_j   g\|_{L^{b_2}}.
$$
This implies that
$$
\begin{aligned}
  \| ({\rm Id}-\Delta)^{\frac12} g \|_{L^{b_2}}   & \lesssim \| S_0({\rm Id}-\Delta)^{\frac12} g\|_{L^{b_2}} + \sum_{j \geq 0} 2^j \|\Delta_j g\|_{L^{b_2}} \\
 & \leq \|g\|_{L^{b_2}} +\sum_{j \geq 0} 2^{js} \|\Delta_j g\|_{L^{b_2}} 2^{j {(1-s)} }\\
 & \lesssim \|g\|_{  B^{s}_{{b_2},q}}
\end{aligned}
$$
since~$s>1$.
This gives the required estimate for the second term in~(\ref{estimatefXig}) and that allows to conclude the proof of Theorem~\ref{main} in the case when~$s \in \R^+ \setminus \N$. The general case~$s>0$ is then obtained by interpolation: we recall indeed that the following complex interpolation is true (see~\cite[Theorem 6.4.5]{bl}, whose proof only relies on the dyadic decomposition and may be easily adapted to our situation)
\begin{equation}\label{interpolationresult2}
[  B^{1-\varepsilon}_{p,q};  B^{1+\varepsilon}_{p,q} ]_{\frac12 } =    B^{1}_{p,q} \, .
\end{equation}
The multilinear interpolation result of~\cite[Theorem 4.4.1]{bl}   provides the case~$s=1$ and the other integer cases are obtained similarly. \qed

\medskip
\noindent Note that the above proof actually gives the following result.
\begin{pro}\label{moregeneralresult2}
 Let $\G$ be a Lie group with polynomial volume growth.  
 
 \noindent For every~$s \geq 1$, $1 \leq q \leq \infty$ and~$1<p<\infty$ one has, writing~$1/p = 1/a_i + 1/b_i$ and choosing~$1<a_i,b_i< \infty$,
 $$
 \|fg\|_{  B^s_{p,q}} \leq \|f\|_{L^{a_1}} \|g\|_{  B^s_{b_1,q}} +  \|g\|_{L^{a_2}} \|f\|_{  B^s_{b_2,q}}.
  $$ 
   
 \end{pro}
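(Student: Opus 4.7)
The plan is to simply track, in the proof of Theorem~\ref{main} just presented, which norm of $f$ and which norm of $g$ appear in each bound, and verify that the right-hand side always falls into one of the two stated shapes. The author's phrase ``the above proof actually gives the following result'' is the key hint: no new ingredient is needed.

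First I would reduce to $s \in [1,2)$, writing $s = 1 + s'$ with $s' \in [0,1)$; the general case $s \geq 1$ follows by an elementary induction on the integer part of $s$, since once the claim is known for some $\sigma \geq 1$, one can apply it to $fg$ together with the equivalence
\[
\|fg\|_{B^{\sigma+1}_{p,q}} \sim \|fg\|_{B^\sigma_{p,q}} + \sum_{i=1}^k \|X_i(fg)\|_{B^\sigma_{p,q}},
\]
provided by the $L^p$-boundedness of the local Riesz transforms for $1<p<\infty$ (same ingredient as in the proof of Theorem~\ref{main}). The non-integer values of $s$ are then recovered by interpolation, as in the end of the proof of Theorem~\ref{main}.

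Next, for the base case $s = 1 + s'$, I would split $\|fg\|_{B^s_{p,q}} \sim \|fg\|_{B^{s'}_{p,q}} + \sum_i \|X_i(fg)\|_{B^{s'}_{p,q}}$. The first piece is handled directly by Proposition~\ref{moregeneralresult}, since $B^s_{b_i,q} \hookrightarrow B^{s'}_{b_i,q}$:
\[
\|fg\|_{B^{s'}_{p,q}} \lesssim \|f\|_{L^{a_1}} \|g\|_{B^s_{b_1,q}} + \|g\|_{L^{a_2}} \|f\|_{B^s_{b_2,q}}.
\]
For the second piece, I would apply Leibniz, $X_i(fg) = f X_i g + g X_i f$, and then Proposition~\ref{moregeneralresult} to each product. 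The term $\|f\,X_i g\|_{B^{s'}_{p,q}}$ is bounded by $\|f\|_{L^{a_1}}\|X_i g\|_{B^{s'}_{b_1,q}} + \|X_i g\|_{L^{b_2}}\|f\|_{B^{s'}_{a_2,q}}$; the first summand is controlled by $\|f\|_{L^{a_1}}\|g\|_{B^s_{b_1,q}}$ since $X_i$ is a first-order horizontal derivative. For the second summand, I would invoke exactly the local Riesz transform argument from the proof of Theorem~\ref{main} to bound $\|X_i g\|_{L^{b_2}}$ in terms of $\|g\|_{L^{b_2}}$ plus a high-frequency piece; this is where the hypotheses $s \geq 1$ and $1 < b_2 < \infty$ enter. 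The term $\|g\,X_i f\|_{B^{s'}_{p,q}}$ is treated symmetrically and produces the $\|g\|_{L^{a_2}}\|f\|_{\dot B^s_{b_2,q}}$ contribution.

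Finally, I would rearrange and regroup terms to match the stated right-hand side, using that $B^s \hookrightarrow B^{s'}$ and that on the $\dot B^s$-scale we may discard the $L^p$ piece $\|S_0(fg)\|_{L^p}$ coming from the inhomogeneous norm. The main obstacle is essentially bookkeeping: one has to make sure that when the Leibniz rule and Proposition~\ref{moregeneralresult} are iterated, the indices $a_i, b_i$ produced at each step can be consolidated into two copies on the right-hand side, and in particular that the exponents remain in $(1,\infty)$ so that the Riesz transform step is legitimate. Once these index bookkeeping details are aligned, the bilinear estimate follows with no new analytic input beyond what is in the proof of Theorem~\ref{main}. \eproof
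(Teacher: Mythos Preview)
Your proposal is correct and follows exactly the paper's approach: the paper's own ``proof'' of this proposition is the single sentence ``Note that the above proof actually gives the following result,'' and you have faithfully unpacked that reference by tracking the indices $a_i,b_i$ through the Leibniz rule, Proposition~\ref{moregeneralresult}, and the local Riesz transform bound for $\|X_i g\|_{L^{b_2}}$, then inducting on $\lfloor s\rfloor$ and interpolating. Your closing remark about the index bookkeeping (ensuring the cross terms of the form $\|f\|_{B^{s'}_{a_2,q}}\|X_ig\|_{L^{b_2}}$ can be absorbed into the two stated shapes) is exactly the residual ambiguity the paper also leaves implicit.
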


 \section{Proof of Theorem~\ref{mainnil}}\label{proofthmmainnil}
As in the previous case
 the idea is to argue by induction for the noninteger values of~$s$, and then by interpolation. To
   do so, we need the following result, which is new to our knowledge, even in the context of the Heisenberg group. 
 \begin{pro}\label{bs+1bs} Let~$\G$ be a nilpotent Lie group and let $s >0$ and~$p \in (1,\infty)$ be given. Then $f \in  \dot  B^{s+1}_{p,q}(\G)$ if and only if for all $i=1,...,k$, we have~$X_i f \in   \dot B^{s}_{p,q}(\G). $
 \end{pro}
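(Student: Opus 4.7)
The proof reduces to the $L^p$-boundedness, for $1<p<\infty$, of the Riesz transforms $R_i := X_i(-\Delta_\G)^{-1/2}$ and their adjoints on a nilpotent Lie group (a classical result after Christ, Alexopoulos), combined with the spectral identity $(-\Delta_\G)^{1/2} = \sum_i R_i^* X_i$ deduced from $-\Delta_\G = \sum_i X_i^* X_i$. Bernstein's inequality (Proposition~4.2 of~\cite{furioli}) will control the factors of $2^j$, and the kernel estimates~\eqref{estimatekernel}--\eqref{thirdestimatekernel} will control the commutators.

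\textbf{Forward direction.} Given $f\in \dot B^{s+1}_{p,q}(\G)$, spectral calculus yields
$$
\Delta_j X_i f \,=\, \psi(-2^{-2j}\Delta_\G)(-\Delta_\G)^{1/2} R_i' f \,=\, 2^j\,\widetilde\Delta_j R_i' f,
$$
with $R_i' := (-\Delta_\G)^{-1/2} X_i = -R_i^*$ (bounded on $L^p$), and $\widetilde\Delta_j := \widetilde\psi(-2^{-2j}\Delta_\G)$, $\widetilde\psi(\mu) := \mu^{1/2}\psi(\mu)$, another Littlewood--Paley block at the same scale (uniformly bounded on $L^p$ since $\widetilde\psi$ has the same annular support as $\psi$). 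Writing
$$
\widetilde\Delta_j R_i' \,=\, R_i'\, \widetilde\Delta_j + [\widetilde\Delta_j, R_i'],
$$
the principal term is bounded by $\|R_i'\|_{L^p\to L^p}\|\widetilde\Delta_j f\|_{L^p}$. For the commutator, one decomposes $f=\sum_{j'}\Delta_{j'}f$ and proves an almost-orthogonality estimate $\|[\widetilde\Delta_j,R_i']\Delta_{j'}f\|_{L^p}\lesssim c_{|j-j'|}\|\Delta_{j'}f\|_{L^p}$ with a summable sequence~$(c_n)$, combining the kernel bounds~\eqref{estimatekernel}--\eqref{thirdestimatekernel} for~$\Psi_j$ with the singular-integral nature of $R_i'$. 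Multiplying by~$2^{js}$ and applying Young's convolution inequality in the $\ell^q$-variable then yields $\|X_i f\|_{\dot B^s_{p,q}}\lesssim \|f\|_{\dot B^{s+1}_{p,q}}$. The converse is symmetric: applying $\Delta_j$ to the spectral identity gives
$$
\Delta_j f \,=\, 2^{-j}\,\widetilde\Delta_j \sum_i R_i^* X_i f \qquad (\widetilde\psi(\mu):=\mu^{-1/2}\psi(\mu)),
$$
and the same commutator control, now moving $\widetilde\Delta_j$ past $R_i^*$, yields $\|f\|_{\dot B^{s+1}_{p,q}}\lesssim \sum_i \|X_i f\|_{\dot B^s_{p,q}}$.

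\textbf{Main obstacle.} The delicate step is the commutator estimate for $[\widetilde\Delta_j, R_i']$ (and its analogue with $R_i^*$): since on a non-abelian Lie group $X_i$ does not commute with the spectral calculus of $-\Delta_\G$, these commutators do not vanish, and their almost-diagonal control in $|j-j'|$ must be extracted from the rapid decay of the Littlewood--Paley kernels~$\Psi_j$ together with the Calder\'on--Zygmund nature of the Riesz transforms. This is precisely where the nilpotency of $\G$ and the restriction $1<p<\infty$ enter essentially: both are needed for the underlying Riesz transform boundedness, and the non-abelianness forces these commutators to be handled directly rather than trivially.
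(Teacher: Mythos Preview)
Your proposal takes a different route from the paper and leaves a genuine gap at the commutator step. The paper never introduces $[\widetilde\Delta_j,R_i']$; instead it invokes the \emph{higher-order} Riesz transforms
\[
\mathcal L_m^k \;:=\; (-\Delta_\G)^{\frac{m-1}2}\,X_k\,(-\Delta_\G)^{-\frac m2},
\]
which by Lohou\'e--Varopoulos~\cite{lohouevaropoulos} are bounded on $L^p(\G)$ for \emph{every} integer $m$ and $1<p<\infty$ when $\G$ is nilpotent. Writing $(-\Delta_\G)^{m/2}(-\Delta_\G)^{-1/2}X_k=\mathcal L_m^k(-\Delta_\G)^{m/2}$ gives
$\|(-\Delta_\G)^{m/2}(-\Delta_\G)^{-1/2}X_k f\|_{L^p}\lesssim\|(-\Delta_\G)^{m/2}f\|_{L^p}$ for all $m$; a density-of-polynomials argument then upgrades this from monomials $\mu\mapsto\mu^{m/2}$ to an arbitrary $\varphi\in C_c^\infty$, yielding directly
\[
\|\varphi(-2^{-2j}\Delta_\G)(-\Delta_\G)^{-1/2}X_k f\|_{L^p}\lesssim\|\varphi(-2^{-2j}\Delta_\G)f\|_{L^p},
\]
from which $\|\Delta_j X_k f\|_{L^p}\lesssim 2^j\|\Delta_j f\|_{L^p}$ follows by Bernstein. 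The converse is handled by the identity $(-\Delta_\G)^{(m+1)/2}=\sum_k\mathcal L_m^k(-\Delta_\G)^{m/2}X_k$. No commutator ever appears.

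In your argument, the bound $\|[\widetilde\Delta_j,R_i']\Delta_{j'}f\|_{L^p}\lesssim c_{|j-j'|}\|\Delta_{j'}f\|_{L^p}$ with summable $(c_n)$ is asserted, not proved, and the kernel estimates~\eqref{estimatekernel}--\eqref{thirdestimatekernel} alone do not deliver it. For $|j-j'|>1$ one has $\widetilde\Delta_j\Delta_{j'}=0$, so the commutator reduces to $\widetilde\Delta_j R_i'\Delta_{j'}=2^{-j}\Delta_j X_i\Delta_{j'}$; the naive bounds $\|X_i\Psi_j\|_{L^1}\lesssim 2^j$ then give at best $c_n\sim 1$, which is not summable (and after multiplying by $2^{j(s+1)}$ you would in fact need $c_n=o(2^{-n(s+1)})$, i.e.\ decay depending on $s$). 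Obtaining genuine off-diagonal decay requires precisely the information that $R_i'$ is almost-diagonal in the Littlewood--Paley decomposition, and that is exactly what the uniform boundedness of the whole family $(\mathcal L_m^k)_{m\in\Z}$ encodes. Your ``main obstacle'' is thus not a technical side issue but the entire content of the proposition; the paper's use of higher-order Riesz transforms is the mechanism that absorbs the non-commutativity, not a detour around it.
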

 \begin{proof}
  On the one hand we need to prove that  for all $i=1,\dots,k$ and~$j \in \N$, 
 $$
 \|\Delta_j X_k f\|_{L^p}  \lesssim 2^j \|\Delta_j   f\|_{L^p}.
 $$
Using Bernstein's inequality and by density of polynomials in the space of continuous functions it is then actually enough to prove that for all integers~$m$,
 \begin{equation}\label{enoughtoprove}
  \| (-\Delta_\G)^\frac m2  (-\Delta_\G)^{-\frac {1}2}X_k    f\|_{L^p}  \lesssim  \| (-\Delta_\G)^\frac m2   f\|_{L^p}.
 \end{equation}
 Indeed if~(\ref{enoughtoprove}) holds, then one also has, multiplying both sides of the inequality by~$2^{jm}$,
 $$
  \| (-2^{2j}\Delta_\G)^\frac m2  (-\Delta_\G)^{-\frac {1}2}X_k    f\|_{L^p}  \lesssim  \| (-2^{2j}\Delta_\G)^\frac m2   f\|_{L^p}
 $$
  so for smooth compactly supported function~$\varphi$, we get by functional calculus
 \begin{equation}\label{alsohas}
 \| \varphi(-2^{2j}\Delta_\G)   (-\Delta_\G)^{-\frac {1}2}X_k    f\|_{L^p}  \lesssim  \| \varphi( -2^{2j}\Delta_\G)    f\|_{L^p}
 \end{equation}
But recalling that~$ \Delta_j = \psi (-2^{2j}\Delta_\G)$ we have
$$
\begin{aligned}
  \| \Delta_j   X_k    f\|_{L^p}  & =    \|  \psi(-2^{2j}\Delta_\G)    X_k    f\|_{L^p}\\
   & =    \| \psi(-2^{2j}\Delta_\G)    (-\Delta_\G)^{ \frac {1}2}  (-\Delta_\G)^{-\frac {1}2}  X_k    f\|_{L^p}
 \, .    \end{aligned}
$$
Then we can write
$$
 \begin{aligned}
 \| \psi(-2^{2j}\Delta_\G)   (-\Delta_\G)^{ \frac {1}2}   (-\Delta_\G)^{-\frac {1}2}  X_k    f\|_{L^p}
       &\lesssim 2^{ j}   \| \psi(-2^{2j}\Delta_\G)    (-\Delta_\G)^{-\frac {1}2}  X_k         f\|_{L^p}  \\
       &\lesssim 2^{ j}   \| \Delta_j   f\|_{L^p}
  \end{aligned}
$$
 due to Bernstein's inequality
  $$
  \|\Delta_j   (-\Delta_\G)^{ \frac {1}2}   f\|_{L^p}
\lesssim 2^{ j}   \| \Delta_j   f\|_{L^p}
  $$
  along with~(\ref{alsohas}). So let us prove~(\ref{enoughtoprove}). 
Actually according to~\cite{lohouevaropoulos} the operator~$ {\mathcal L}_m^k =  (-\Delta_\G)^{\frac {m-1}2} X_k  (-\Delta_\G)^{-\frac m2 }$ is bounded over~$L^p(\G)$  for~$1<p<\infty$.  That property is false if the group is not nilpotent (see for instance~\cite{alexopoulos}) so it
  is here that the assumption that~$\G$ is nilpotent is used. So writing
\begin{eqnarray*}
 (-\Delta_\G)^\frac m2  (-\Delta_\G)^{-\frac {1}2}X_k    f  &=&   (-\Delta_\G)^\frac m2  (-\Delta_\G)^{-\frac {1}2}X_k   (-\Delta_\G)^{-\frac m2 } (-\Delta_\G)^{ \frac m2 }  f \\
 &=&  {\mathcal L}_m^k  (-\Delta_\G)^{ \frac m2 }  f,
\end{eqnarray*}
the result follows.
  
\noindent On the other hand, using again the fact that polynomials are dense in the space of continuous functions, we also need to check that for all~$f$,
 $$
 \|(-\Delta_\G)^\frac {m+1}2  f\|_{L^p}  \lesssim \sup_k \|(-\Delta_\G)^\frac m2 X_k  f\|_{L^p}.
 $$
 To prove that we simply use again the fact that~$ {\mathcal L}_m^k $ is bounded over~$L^p(\G)$  for every index~$1<p<\infty$. Indeed we can write
 $$
 \begin{aligned}
  \|(-\Delta_\G)^\frac {m+1}2  f\|_{L^p} &\leq \sum_k  \|(-\Delta_\G)^\frac {m-1}2 X_k^2  f\|_{L^p} \\
    & =  \sum_k  \|(-\Delta_\G)^\frac {m-1}2 X_k   (-\Delta_\G)^{-\frac m2 } (-\Delta_\G)^{\frac m2 }  X_k f\|_{L^p} \\
     & =  \sum_k \| {\mathcal L}_m^k  (-\Delta_\G)^{\frac m2 }  X_k f\|_{L^p} 
 \end{aligned}
 $$
 whence the result.
Proposition~\ref{bs+1bs} is proved.  \end{proof} 

\noindent Proposition~\ref{bs+1bs} allows to obtain rather easily Theorem~\ref{mainnil} when~$s \in \R^+ \setminus \N$, using also Proposition~\ref{moregeneralresult}.  Let us give the details.

\medskip
\noindent The fact that~$\dot B^s_{\frac ds,1} (\G)$ is embedded in~$L^\infty (\G)$ simply follows from the easy calculations:
$$
\begin{aligned}
\|f\|_{L^\infty (\G)} & \leq \sum_{j \in \Z} \|\Delta_jf\|_{L^\infty (\G)} \\
& \lesssim  \sum_{j \in \Z} 2^{js}  \|\Delta_jf\|_{L^{\frac ds} (\G)}
\end{aligned}
$$
by the Bernstein inequality (Proposition~4.2 of~\cite{furioli}).

\medskip
\noindent
Now let us prove that~$\dot B^s_{\frac ds,1} (\G)$ is an algebra, and then let us prove that
for every~$1 < s$ and every~$1<p <\infty$,  if~$f$ and~$g$ belong to~$ \dot B^s_{p,\frac{s-1}s  } \cap L^{\infty}(\G) $  then~$fg $ belongs to~$  \dot B^s_{p,1} \cap L^{\infty}(\G)$.
We   follow the lines of the inhomogeneous case treated above, but we   need to be careful 
that the norms are now homogeneous.  Let us define~$s = 1+s'$ with~$s' \in (0,1)$.
We write as in the inhomogeneous case, by the Leibniz rule,
$$
 \|X_i (fg)\|_{\dot B^{s'}_{p,q}}  \leq  \|f X_i g\|_{\dot B^{s'}_{p,q}}  +  \|gX_i f\|_{\dot B^{s'}_{p,q}} 
$$
and study more particularly the first term on the right-hand side, which satisfies due to Proposition~\ref{moregeneralresult}, for~$1/a_i + 1/b_i = 1/p$ (and choosing from now on~$1 < a_i,b_i < \infty$),
\begin{equation}\label{firststep}
 \|f X_i g\|_{\dot B^{s'}_{p,q}}  \lesssim   \|f\|_{L^{a_1}}  \|X_i g\|_{\dot B^{s'}_{b_1,q}} + \| f\|_{\dot B^{s'}_{a_2,q}}  \|X_ig\|_{L^{b_2}} .
\end{equation}
On the one hand
$$
 \|X_i g\|_{\dot B^{s'}_{b_1,q}} \lesssim \|g\|_{\dot B^{s}_{b_1,q}},
$$
so it suffices to estimate~$ \| f\|_{\dot B^{s'}_{a_2,q}}  \|X_ig\|_{L^{b_2}}$.

\noindent In the case when~$q=1$ and~$p = d/s$ we choose~$a_2 = d/(s-1) = d/s'$ and~
$b_2 = d$ and use Bernstein's inequality which states that
$$
\|f\|_{\dot B^{s'}_{a_2,1}} \lesssim \|f\|_{\dot B^s_{\frac ds,1} }.
$$
Then according to~\cite{crt} we have
$$
\|X_ig\|_{L^{d}} \lesssim \|(-\Delta_\G)^\frac s2 g\|_{L^{\frac ds}}^\frac 1s \| g\|_{L^{\infty}}^{1-\frac 1s}
$$
and moreover
$$
 \begin{aligned} \|(-\Delta_\G)^\frac s2 g\|_{L^{\frac ds}} 
 &\leq \sum_{j \in \Z} \|\Delta_j (-\Delta_\G)^\frac s2 g\|_{L^{\frac ds}}  \\
  &\lesssim \sum_{j \in \Z} 2^{js} \|\Delta_j  g\|_{L^{\frac ds}} \\
    &\lesssim\|g\|_{\dot B^{s}_{\frac ds,1}}  
\end{aligned}$$
by Bernstein's inequality, so
we infer that
$$
\|X_ig\|_{L^{d}}  \lesssim\|g\|_{\dot B^{s}_{\frac ds,1}}^\frac 1s \| g\|_{L^{\infty}}^{1-\frac 1s}
$$
and the result follows in the case~$1<s<2$. The other noninteger cases  are obtained by induction. To prove the result in the integer case we use a nonlinear interpolation argument as in the inhomogeneous case above: let us detail the case~$s=1$ for instance. We have indeed  (as pointed out in~\cite{bl}, the interpolation results hold in the homogeneous case):
$$
[\dot B^{1-\varepsilon}_{\frac d{1-\varepsilon},1} ,\dot B^{1+\varepsilon}_{\frac d{1+\varepsilon},1} ]_\frac12 = \dot B^1_{d,1}
$$
so the result follows. The other cases are obtained similarly.

\medskip
\noindent In the case when~$f$ and~$g$ belong to~$\dot B^s_{p,\frac{s-1}s}$ then we use as above the fact that  
$$
  \|f\|_{L^{a_1}}  \|X_i g\|_{\dot B^{s'}_{b_1,q}} \lesssim   \|f\|_{L^{a_1}}   \|g\|_{\dot B^{s}_{b_1,q}},
$$
and in particular we can take~$a_1 = \infty$ and~$b_1 = p$, and we choose~$a_2 = ps/(s-1)$ and~$b_2 = ps$. Then H\"older's inequality gives
$$
 \begin{aligned}
2^{js'}\|\Delta_j f\|_{L^{\frac{ps}{s-1}}}& \lesssim  2^{js'}\|\Delta_j f\|_{L^p}^\frac{s-1}s  \|\Delta_j f\|_{L^\infty}^\frac{1}s   \\
& \lesssim \big(  2^{js}\|\Delta_j f\|_{L^p}\big)^\frac{s-1}s \|f\|_{L^\infty}^\frac1s.
\end{aligned}
$$
Since as above
$$
 \begin{aligned}
\|X_ig\|_{L^{ps}} &\lesssim \|(-\Delta_\G)^\frac s2 g\|_{L^p}^\frac 1s \| g\|_{L^{\infty}}^{1-\frac 1s} \\
&\lesssim\|g\|_{\dot B^{s}_{p,1}}^\frac 1s \| g\|_{L^{\infty}}^{1-\frac 1s}
\end{aligned}
$$
the result follows in the case~$1<s$ with~$s$ non integer. Let us detail for instance how to recover the case~$s=2$: we write
for instance
$$
[\dot B^\frac94_{p,\frac59} ,\dot B^\frac95_{p,\frac49} ]_\frac59 = \dot B^2_{p,\frac12}
$$
so the result follows by bilinear interpolation. The other cases are obtained similarly.

\medskip
\noindent Finally let us turn to the last statement of the theorem, namely the fact that if~$1<p_1,p_2<\infty$ with~$1/p = 1/p_1+1/p_2$, if~$1 \leq q \leq \infty$, and if~$f $ belongs to~$\dot B^s_{p_1,q  } \cap L^{p_1}(\G) $ and~$g$ belongs to~$\dot B^s_{p_2,q  } \cap L^{p_2}(\G) $ then~$fg \in \dot B^s_{p,q}\cap L^p (\G)$.
We recall the real interpolation     result~\cite[Theorem 6.4.5]{bl}, which holds also in the homogeneous case as indicated in~\cite{bl}, according to which
\begin{equation}\label{interpolationresult}
[\dot B^{s_1}_{p ,q_1};\dot B^{s_2}_{p,q_2} ]_{\theta,r} = \dot B^{s}_{p,r}   \, , \quad s = \theta s_1 + (1-\theta)s_2 \,  , \quad s_1 \neq s_2
\end{equation}
 so in particular
$$
[\dot B^{0}_{a_2,\infty};\dot B^{s}_{a_2,q} ]_{\frac{s-1}s,1} = \dot B^{s'}_{a_2,1} \hookrightarrow\dot B^{s'}_{a_2,q} \, .
$$
We infer that~$f$ belongs to~$\dot B^{s'}_{a_2,q}$ as soon as~$f $ belongs to~$ L^{a_2 } \cap \dot B^{s}_{a_2,q} \hookrightarrow \dot B^{0}_{a_2,\infty}\cap\dot B^{s}_{a_2,q}$.
Similarly
$$
[\dot B^{0}_{b_2,\infty};\dot B^{s}_{b_2,q} ]_{\frac1s,1} = \dot B^1_{b_2,1}
$$
so using the fact that
$$
\begin{aligned}
\| X_i g\|_{L^{b_2}} &  \lesssim  \| X_i g\|_{\dot B^0_{b_2,1}} \\
&   \lesssim  \|  g\|_{\dot B^1_{b_2,1}}
\end{aligned}
$$
we infer that~$X_i g$ belongs to~$L^{b_2}$ as soon as~$g \in L^{b_2} \cap \dot B^{s}_{b_2,q} \subset\dot B^{0}_{b_2,\infty}\cap\dot B^{s}_{b_2,q} $.
The result follows for~$1 <s<2$, and the theorem is proved by an easy induction, and interpolation as in the inhomogeneous case. \qed

\section{Paradifferential calculus on H-type groups}\label{paradiff}
\setcounter{equation}{0}
In this section, we describe several topics related to   harmonic analysis on H-type groups, which we recall are particular cases of Carnot groups  where it turns out that an explicit Fourier transform is available. 

\subsection{Fourier transforms} \label{Fouriertrans}

In order to construct para-differential and pseudo-differential calculus on H-type groups, one needs to introduce a suitable Fourier transform. This is classically done through the infinite-dimensional unitary irreducible representations on a suitable Hilbert space since H-type groups are non commutative. Two representations are available: the Bargmann representation (see~\cite{kaplanRicci} for instance) and the Schr\"odinger representation (see~\cite{corwin} for instance).

\subsubsection{General definitions}
Let us define generally what a Fourier transform is on non commutative groups. Consider a Hilbert space $\mathcal H_\lambda (\mathbb K^\ell)$ of functions defined on some field $\mathbb K$ (which can be~$\R$ or~$\C$). The irreducible unitary representations
$
\displaystyle \pi_\lambda : \G \to \mathcal H_\lambda (\mathbb K^\ell)
$ are parametrized by $\lambda \in \R^n \backslash \left \{ 0\right \} $ where~$n$ is the dimension of the center of the group.
We have then the following definition. 
\begin{defi}
We define the Fourier transform on $\G$ by the following formula: let $f \in L^1(\G)$. Then the Fourier transform of $f$ is the operator on $\mathcal{H}_\lambda (\mathbb K^\ell)$ parametrized by $\lambda \in \R^n \backslash \left \{0 \right \}$ defined by 
$$\mathcal F (f)(\lambda)=\int_\G f(z,t)\pi_\lambda(z,t)\,dz\,dt.$$ 
\end{defi}
 \noindent  Note that one has $  {\mathcal F}( f \star g )( \lam ) = {\mathcal
F}(f) ( \lam ) \circ {\mathcal F}(g )( \lam ). $ 
   Let $F_{\alpha,\lambda}$, $\alpha \in \mathbb N^\ell$ be a  Hilbert basis of $\mathcal H_\lambda (\mathbb K^\ell)$. 
We recall that an operator $A(\lam)$ of ${\mathcal H}_\lam$ such that
$$ \sum_{\al \in \N^\ell} \left|(A(\lam)F_{\al,  \lam}, F_{\al,
   \lam})_{{\mathcal H}_{\lam}}\right|<+\infty$$
   is said to be of {\it trace-class}. One then sets
$\displaystyle 
    {\rm tr}   \left(A(\lam)\right)=
   \sum_{\al \in \N^\ell} (A(\lam)F_{\al,  \lam}, F_{\al,
   \lam})_{{\mathcal H}_{\lam}}, 
$
and the following inversion theorem holds.
\begin{theo}
\label{inversionth}
  If a function~$f$ satisfies
$\displaystyle
\sum_{\al \in \N^d} \int_{\R^n}
\| {{\mathcal F}(f)(\lam ) F_{\al, \lam} }\|_ {{\mathcal
H}_{\lam}}  |\lam |^{\ell} d\lam < \infty
$
then we have for almost every~$w$,
 $$
 f(w)=
\frac{2^{\ell-1}}{\pi^{\ell + 1}} \int_{\R^n} {\rm tr}
\left( \pi_{\lam}({w^{-1}}){\mathcal F}(f)(\lam )\right)
  |\lam |^{\ell} d\lam.
  $$
   \end{theo}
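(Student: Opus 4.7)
\noindent \emph{Proof plan.} The idea is to reduce the non-commutative inversion formula to two classical scalar inversion formulas: the ordinary Fourier inversion on the center~$\R^n$ and the inversion of the Weyl transform on~$\R^{2\ell}$. The weight~$|\lam|^\ell$ is the Plancherel density of the group, and the constant~$2^{\ell-1}/\pi^{\ell+1}$ records the normalizations of the Haar measure, of the Schr\"odinger representation, and of the two scalar Fourier transforms used on the way.

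First I would work in the Schr\"odinger realization~$\H_\lam = L^2(\R^\ell)$. Because an H-type group has the explicit product law recalled at the end of Section~\ref{carnot}, the representation factors as $\pi_\lam(z,t) = e^{i\lam \cdot t}\,\sigma_\lam(z)$, where~$\sigma_\lam$ is a projective (Weyl) representation of the abelian~$\R^{2\ell}$ on~$L^2(\R^\ell)$ built from the matrices~$U^{(k)}$. Substituting this factorization into the definition of~$\mathcal F(f)(\lam)$ and carrying out the~$t$-integral gives
$$\mathcal F(f)(\lam) \;=\; \int_{\R^{2\ell}} \tilde f(z,\lam)\,\sigma_\lam(z)\, dz, \qquad \tilde f(z,\lam) := \int_{\R^n} f(z,t)\, e^{i\lam\cdot t}\, dt,$$
so that~$\mathcal F(f)(\lam)$ is exactly the Weyl transform (at parameter~$\lam$) of the partial Fourier transform of~$f$ in the central variable.

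The second step inverts the Weyl transform for each fixed~$\lam$. The classical computation, which amounts to the Plancherel theorem on~$\R^{2\ell}$ expressed in the Hermite basis~$(F_{\al,\lam})$ of~$L^2(\R^\ell)$ and dilated via~$z\mapsto |\lam|^{1/2} z$ to scale~$\lam$ out, yields
$$\tilde f(z,\lam) \;=\; c_\ell\,|\lam|^\ell\,\mathrm{tr}\bigl(\sigma_\lam(z)^{-1}\,\mathcal F(f)(\lam)\bigr),$$
with an explicit constant~$c_\ell$, the factor~$|\lam|^\ell$ being the Jacobian of the said dilation. Applying ordinary Fourier inversion on~$\R^n$ in~$\lam$ and then unwinding the factorization of~$\pi_\lam$ rewrites the right-hand side of this identity as~$\mathrm{tr}(\pi_\lam((z,t)^{-1})\mathcal F(f)(\lam))$ integrated against~$|\lam|^\ell\, d\lam$. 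The summability hypothesis of the theorem legitimates a Fubini argument: it lets the expansion of the trace in the basis~$(F_{\al,\lam})$ be interchanged with the integral in~$\lam$ and with the pairing, so every step makes sense as an absolutely convergent series of complex numbers.

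The only genuine obstacle is bookkeeping: keeping careful track of the product of constants coming from (i) the normalization of the Haar measure on~$\G$, (ii) the Fourier inversion on~$\R^n$, (iii) the Weyl inversion on~$\R^{2\ell}$, and (iv) the chosen normalization of~$\pi_\lam$, in order to obtain exactly the prefactor~$2^{\ell-1}/\pi^{\ell+1}$. All the analytical points (almost-everywhere rather than pointwise equality, trace-class nature of~$\mathcal F(f)(\lam)$, interchanges of summation and integration) are controlled by the absolute-convergence hypothesis stated in the theorem.
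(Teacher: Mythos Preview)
The paper does not prove this statement: Theorem~\ref{inversionth} is stated as a classical fact about the Fourier transform on (H-type) groups and is used without proof. So there is no ``paper's own proof'' to compare against.

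Your sketch is the standard route to this inversion formula on two-step groups with one-dimensional or higher-dimensional center: factor $\pi_\lam(z,t)=e^{i\langle\lam,t\rangle}\sigma_\lam(z)$, recognize $\mathcal F(f)(\lam)$ as the Weyl transform of the partial Fourier transform in the central variable, invert the Weyl transform at each fixed $\lam$ (this is where the Jacobian $|\lam|^\ell$ and the Hermite/Fock basis enter), and then invert the scalar Fourier transform on $\R^n$. The summability hypothesis is exactly what is needed to justify Fubini and the trace expansion. The only point worth flagging is that the constant $2^{\ell-1}/\pi^{\ell+1}$ depends on the precise normalization of the Bargmann representation and of the Haar measure chosen in the paper; your outline correctly identifies this as pure bookkeeping, but if you want the stated constant you must match the paper's conventions (e.g.\ the $(2|\lam|/\pi)^\ell$ weight in the Fock norm and the explicit form of $\pi_\lam$ in Section~\ref{Fouriertrans}) rather than generic Schr\"odinger-picture normalizations.
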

\noindent Following \cite{yang}, the representation $\pi_\lambda$ on $\G$ 
determines a representation $\pi_\lambda^*$ on its Lie algebra~$\mathcal G$ on the space of $C^\infty$ vectors. The representation $ \pi_\lambda^*$
is defined by 
$\displaystyle\pi^*_\lambda(X)f=\Big ( \frac{d}{dt} \pi_\lambda(\exp(tX))f \Big )|_{t=0}$
for every $X$ in the Lie algebra $\mathcal G$. We can extend $\pi_\lambda^*$ to the 
universal enveloping algebra of left-invariant differential operators on $\G$. 
Let $\mathcal K$ be a left-invariant operator on $\G$, then we have 
$$\mathcal K (\pi_\lambda f,g)=(\pi_\lambda \pi^*_\lambda(\mathcal K)f,g) $$
where $(\cdot , \cdot)$ stands for the $\mathcal H_\lambda$ inner product.

\subsubsection{Bargmann representations on H-type groups}
Given $\lambda \in \R^n \backslash \left \{ 0 \right \}$, consider the Hilbert space (called the Fock space) $\mathcal H_\lambda (\C^\ell) $ of all entire holomorphic functions $F$ on $\C^\ell$ such that 
$\displaystyle \|F\|^2_{\mathcal H_\lambda}=\Big (\frac{2|\lambda|}{\pi} \Big)^\ell\int_{\C^\ell} |F(\xi)|^2 e^{-|\lambda||\xi|^2}d\xi $ is finite.
 The corresponding irreducible unitary representation $\pi_\lambda$ of the group~$\G$ is realized on $\mathcal H_\lambda (\C^\ell)$ by (recall that $t \in \R^n$ and $z,\xi \in \C^\ell$) (see \cite{dooley})
$$(\pi_{\lambda}(z,t)F)(\xi)=F(\xi-z)e^{i \langle \lambda,t \rangle - |\lambda |(|z|^2+\langle z,\xi \rangle)}.$$
 It is a well-known fact that the Fock space admits an orthonormal basis given by the monomials 
$\displaystyle
F_{\alpha,\lambda}(\xi)= \frac{(\sqrt{2 |\lambda|}\,\xi)}{\sqrt{\alpha\, !}}^\alpha,\,\,\,\alpha \in \mathbb N^\ell .
$
 A very important property for us is the following classical diagonalization result (see~\cite{kaplanRicci}, or~\cite{bfg} and the references therein). 
\begin{pro}
Let $\mathcal F_B$ be the   Fourier transform associated to the Bargmann representation~$  \pi$.The following diagonalization property holds: for every $f \in \mathcal S(\G)$,
$$
\mathcal F_B(\Delta_\G f) (\lambda)F_{\alpha,\lambda}=-4|\lambda|(2|\alpha|+\ell)\mathcal F_B(f) (\lambda)F_{\alpha,\lambda} \, .
$$
\end{pro}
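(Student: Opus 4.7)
The plan is to reduce the assertion to a spectral identity for the Lie algebra representation~$\pi_\lambda^*$ acting on the Bargmann basis~$\{F_{\alpha,\lambda}\}$, and then to prove that identity by direct differentiation of the explicit formula for~$\pi_\lambda$.

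First I would invoke the intertwining formula recalled just above the statement: applied to the left-invariant operator~$\mathcal K = \Delta_\G$, it gives~$\mathcal F_B(\Delta_\G f)(\lambda) = \mathcal F_B(f)(\lambda)\circ\pi_\lambda^*(\Delta_\G)$. The claim is therefore equivalent to the pointwise spectral identity~$\pi_\lambda^*(\Delta_\G) F_{\alpha,\lambda}= -4|\lambda|(2|\alpha|+\ell) F_{\alpha,\lambda}$, which is a purely Hilbert-space computation on the Fock space~$\mathcal H_\lambda(\C^\ell)$.

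Next I would compute~$\pi_\lambda^*(X_j)$ and~$\pi_\lambda^*(Y_j)$ directly. Because the group is two-step nilpotent, Baker--Campbell--Hausdorff gives~$\exp(sX_j)=(se_j,0)$ and~$\exp(sY_j)=(ise_j,0)$ under the identification~$\mathcal Z^\perp\simeq\C^\ell$, with~$X_j\leftrightarrow e_j$ and~$Y_j\leftrightarrow ie_j$. Differentiating the Bargmann formula~$(\pi_\lambda(z,t)F)(\xi)=F(\xi-z)e^{i\langle\lambda,t\rangle-|\lambda|(|z|^2+\langle z,\xi\rangle)}$ at~$s=0$ then produces two first-order operators on the Fock space, each a linear combination of the multiplication~$\xi_j\cdot$ and the holomorphic derivative~$\partial_{\xi_j}$. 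Put differently, up to overall normalization~$\pi_\lambda^*(X_j)$ and~$\pi_\lambda^*(Y_j)$ are the standard creation/annihilation operators of the quantum harmonic oscillator in its Bargmann model.

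The final step consists in squaring and summing:~$\pi_\lambda^*(\Delta_\G)=\sum_j \pi_\lambda^*(X_j)^2+\pi_\lambda^*(Y_j)^2$. Due to the~$(-i)^2=-1$ appearing when squaring the~$Y_j$-contribution, the pure~$\partial_{\xi_j}^2$ and~$\xi_j^2$ terms cancel, while the cross terms add coherently, producing a multiple of the number operator~$N=\sum_j \xi_j\partial_{\xi_j}$ plus a scalar coming from the Heisenberg commutation~$[\partial_{\xi_j},\xi_j]=1$. A careful computation of the prefactors gives~$\pi_\lambda^*(\Delta_\G)=-4|\lambda|\bigl(2N+\ell\bigr)$. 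Since the Fock basis~$F_{\alpha,\lambda}$ is, by its very definition, the basis of monomials, we have~$N F_{\alpha,\lambda}=|\alpha|F_{\alpha,\lambda}$, which yields the claimed eigenvalue identity.

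The main obstacle will be the bookkeeping of constants and signs in the computation of~$\pi_\lambda^*(X_j)$ and~$\pi_\lambda^*(Y_j)$: the overall prefactor~$-4$ depends on the precise convention chosen for~$\langle z,\xi\rangle$ (complex bilinear versus Hermitian) and on the normalization of the Fock norm~$\|\cdot\|_{\mathcal H_\lambda}$. Once those conventions are fixed the rest of the argument is essentially a short algebraic manipulation using only~$[\partial_{\xi_j},\xi_j]=1$ and the definition of~$F_{\alpha,\lambda}$.
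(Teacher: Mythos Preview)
The paper does not actually supply a proof of this proposition; it is stated as a known diagonalization result and immediately used. Your outline is correct and is the standard argument. In fact, the paper carries out precisely your step~1 (passing to the dual representation~$\pi_\lambda^*$ via integration by parts and the intertwining identity~$\mathcal K\pi_\lambda=\pi_\lambda\pi_\lambda^*(\mathcal K)$) when it proves the analogous lemma for the Schr\"odinger representation a few lines later; there the spectral computation is then read off from the Hermite operator~$-\partial_\xi^2+|\lambda|^2|\xi|^2$, whereas in the Bargmann model you correctly identify it as a computation with the number operator~$N=\sum_j\xi_j\partial_{\xi_j}$. Your caveat about the constant is well placed: note that the paper itself records the eigenvalue~$(2|\alpha|+\ell)|\lambda|$ in the Schr\"odinger picture versus~$4|\lambda|(2|\alpha|+\ell)$ in the Bargmann picture, so the factor of~$4$ really is convention-dependent and you should pin it down once by fixing whether~$\langle z,\xi\rangle$ is bilinear or sesquilinear in the formula for~$\pi_\lambda$.
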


\noindent This allows to define the following formula, for every $\rho \in \mathbb R$:
$$\mathcal F_B((-\Delta_\G)^{\rho} f) (\lambda)F_{\alpha,\lambda}=(4|\lambda|(2|\alpha|+\ell))^\rho\mathcal F_B(f) (\lambda)F_{\alpha,\lambda}.$$

\subsubsection{The $L^2$ representation on H-type groups}
Another useful representation is the so-called Schr\"odinger, or $L^2$ representation. In this case, the  unitary irreducible representations are given on $L^2(\R^\ell)$ by, for $\lambda \in \R^n $ (and writing~$z=(x,y)$):
$\displaystyle(\tilde \pi_\lambda(z,t)F)(\xi)=e^{i\langle \lambda, t \rangle +|\lambda | i( \sum_{j=1}^\ell x_j \xi_j +\frac12 x_j y_j)}F(\xi+y). $
The intertwining operator between the Bargmann and the $L^2$ representations is
the Hermite-Weber transform~$ \displaystyle K_\lambda:{\mathcal
H}_\lambda\rightarrow  L^2(\R^\ell)
$
 given by
$$
  (K_\lambda
\phi)(\xi)=C_{\ell}|\lambda|^{\ell/4} {\rm
e}^{|\lambda|\frac{|\xi|^2}{2}}
\phi\left(-\frac{1}{2|\lambda|}\frac{\partial}{\partial\xi}\right){\rm
e}^{-|\lambda|\,|\xi|^2}, 
$$
which is unitary and satisfies~$\displaystyle K_\lambda
\pi_\lambda(z,t)=\tilde \pi_\lambda(z,t) K_\lambda .$ 
Following \cite{yang} and the previous description, we have
$\displaystyle\tilde \pi^*_\lambda (X_j)=i|\lambda |\xi_j $ and~$\displaystyle \tilde \pi^*_\lambda (Y_j)=\frac{\partial}{\partial \xi_j}$
for $j=1,...,\ell$, and similarly for~$k=1,...,n$, $\displaystyle\tilde  \pi^*_\lambda(\partial_{t_k} )=i \lambda_k$. Therefore, we have 
$$
\tilde \pi^*_\lambda (-\Delta_\G)=-\sum_{j=1}^n \frac{\partial^2}{\partial \xi_j^2}+|\lambda|^2 |\xi|^2. 
$$
Notice  that this is a Hermite operator and the eigenfunctions of $\pi^*_\lambda (-\Delta_\G)$ are 
$\displaystyle\Phi_\alpha^\lambda(\xi)=|\lambda |^{n/4}\Phi_\alpha (\sqrt{|\lambda |} \xi),$ $ \alpha=(\alpha_1,...,\alpha_\ell)$
where $\Phi_\alpha(\xi)$ is the product $\psi_{\alpha_1}(\xi_1)...\psi_{\alpha_\ell}(\xi_\ell)$ and 
$\psi_{\alpha_j}(\xi_j)$ is the eigenfunction of $-\frac{\partial^2}{\partial \xi_j^2}+\xi_j^2$
with eigenvalue~$2\alpha_j+1$. This leads to the following formula, where $|\alpha|=\alpha_1+...+\alpha_\ell:$
$$\tilde \pi^*_\lambda (-\Delta_\G)\Phi_\alpha^\lambda=(2|\alpha|+\ell)|\lambda| \Phi_\alpha^\lambda.$$
As a consequence, one has the following lemma. 
\begin{lemma}
Let $\mathcal F_S$ be the   Fourier transform associated to the Schr\"odin\-ger representation~$\tilde \pi$.The following diagonalization property holds: for every~$f $ in~$ \mathcal S(\G)$, 
$$
\displaystyle\mathcal F_S((-\Delta_\G)f)\Phi_\alpha^\lambda=(2|\alpha|+\ell) |\lambda| \mathcal F_S(  f) \Phi_\alpha^\lambda \, .
$$
\end{lemma}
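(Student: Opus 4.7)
The strategy is to reduce the statement to two ingredients already in place in the text above: the intertwining property of the Schr\"odinger Fourier transform with left-invariant differential operators, and the eigenvalue identity
\[
\tilde\pi^*_\lambda(-\Delta_\G)\,\Phi_\alpha^\lambda \;=\; (2|\alpha|+\ell)|\lambda|\,\Phi_\alpha^\lambda,
\]
which is an immediate consequence, obtained in the paragraph preceding the lemma, of the explicit formula $\tilde\pi^*_\lambda(-\Delta_\G)=-\sum_{j=1}^\ell \partial_{\xi_j}^2+|\lambda|^2|\xi|^2$ together with the standard spectral theory of the rescaled harmonic oscillator.

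\smallskip
\noindent
\textbf{Step one (intertwining).} For $f\in\mathcal S(\G)$ and any left-invariant vector field~$X$ in the family $(X_1,\dots,X_\ell,Y_1,\dots,Y_\ell)$, I would first establish
\[
\mathcal F_S(Xf)(\lambda)\;=\;-\,\mathcal F_S(f)(\lambda)\circ\tilde\pi^*_\lambda(X).
\]
The derivation amounts to writing $Xf(w)=\frac{d}{dt}\big|_{t=0}f(w\exp(tX))$, performing the change of variables $w\mapsto u\exp(-tX)$ in the integral $\int(Xf)(w)\tilde\pi_\lambda(w)\,dw$ (which preserves the Haar measure since $\G$ is unimodular), and differentiating at $t=0$ under the integral sign, using $\frac{d}{dt}\big|_{t=0}\tilde\pi_\lambda(\exp(-tX))=-\tilde\pi^*_\lambda(X)$. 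Squaring and summing over the generators then removes the sign and yields
\[
\mathcal F_S\bigl((-\Delta_\G)f\bigr)(\lambda)\;=\;\mathcal F_S(f)(\lambda)\circ\tilde\pi^*_\lambda(-\Delta_\G),
\]
which is the specialization to $\mathcal K=-\Delta_\G$ of the general identity $\mathcal K(\pi_\lambda f,g)=(\pi_\lambda\pi^*_\lambda(\mathcal K)f,g)$ recalled in Section~\ref{Fouriertrans}.

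\smallskip
\noindent
\textbf{Step two (conclusion).} Evaluating the intertwining identity on the Hermite basis vector $\Phi_\alpha^\lambda$ and inserting the eigenvalue identity gives
\[
\mathcal F_S\bigl((-\Delta_\G)f\bigr)(\lambda)\,\Phi_\alpha^\lambda \;=\; \mathcal F_S(f)(\lambda)\bigl[(2|\alpha|+\ell)|\lambda|\,\Phi_\alpha^\lambda\bigr]\;=\;(2|\alpha|+\ell)|\lambda|\,\mathcal F_S(f)(\lambda)\,\Phi_\alpha^\lambda,
\]
since the scalar factor commutes with the linear operator $\mathcal F_S(f)(\lambda)$.

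\smallskip
\noindent
\textbf{Expected obstacle.} The only delicate point is the rigorous justification of the differentiation under the integral sign and of the change of variables used in step one, because $\tilde\pi^*_\lambda(X)$ is unbounded on $L^2(\R^\ell)$. This is however routine when the identity is tested against smooth vectors such as $\Phi_\alpha^\lambda$, which are analytic vectors of the Schr\"odinger representation, so the argument presents no difficulty specific to the H-type setting. An alternative, equivalent route would be to transport the Bargmann diagonalization stated in the previous Proposition through the intertwining operator $K_\lambda$ (using $K_\lambda\pi_\lambda=\tilde\pi_\lambda K_\lambda$), but the direct computation above is shorter.
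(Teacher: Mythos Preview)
Your proposal is correct and follows essentially the same route as the paper's proof: the paper integrates by parts to move $-\Delta_\G$ from $f$ onto $(z,t)\mapsto\tilde\pi_\lambda(z,t)\Phi_\alpha^\lambda$, invokes the identity $(-\Delta_\G)\tilde\pi_\lambda(w)\Phi=\tilde\pi_\lambda(w)\tilde\pi^*_\lambda(-\Delta_\G)\Phi$ coming from the definition of the dual representation, and then applies the Hermite eigenvalue formula --- which is exactly your Step~one (phrased via change of variables rather than integration by parts) followed by your Step~two. The only cosmetic difference is that the paper works directly with $-\Delta_\G$ throughout rather than first treating a single vector field $X$ and then squaring and summing.
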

\begin{proof}
We have by definition 
$$
\mathcal F_S((-\Delta_\G)f)\Phi_\alpha^\lambda = \int_\G (-\Delta_\G)f(z,t)\tilde \pi_\lambda(z,t)\Phi_\alpha^\lambda = \int_\G f(z,t)(-\Delta_\G)\tilde \pi_\lambda(z,t)\Phi_\alpha^\lambda.
$$
Using the definition of the dual representation, we have 
$$\mathcal F_S((-\Delta_\G)f)\Phi_\alpha^\lambda=\int_\G f(z,t)\pi_\lambda(z,t)\pi^*_\lambda(-\Delta_\G)\Phi_\alpha^\lambda$$
and using the properties of the Hermite operator, this gives   the   result. 
\end{proof}

\subsection{A localization lemma }\label{loclemma}
As in \cite{bg}, one can prove a localization lemma (also called Bernstein Lemma), which we     state here in the context of the Bargmann representation.  The
proof is omitted as it is identical to the Heisenberg situation treated in~\cite{bg}. Note that using  Proposition~\ref{bs+1bs}, the  last statement of the lemma could be extended to iterated vector fields~$X^I$. We denote $\mathcal C_0$ the ring $\left \{ \tau \in \mathbb R\,\,1/2 \leq |\tau | \leq 4 \right \}$ and by $\mathcal B_0$ the ball  $\left \{ \tau \in \mathbb R\,\, |\tau| \leq 2 \right \}$. 
\begin{lem}  \label{lem:lech}
Let $p$ and $q$ be two elements of~$[1,\infty]$, with~$p \leq q$, and let~$u \in  {\mathcal S}(\G)$ satisfy  for all~$\alpha \in \N^\ell$,
$ {\mathcal F}_B (u)(\la) F_{\alpha, \lam}   = {\bf 1}_{ \lam \in (2|\alpha|+\ell)^{-1}2^{2j}{\mathcal B}_0}  {\mathcal F}_B (u)(\la) F_{\alpha, \lam}  .
 $ Then  we have 
$$\forall k \in \N, \quad    \sup_{|\beta | = k} \|\X^\beta  u\|_{L^q(\G)} \leq
C_k 2^{Nj(\frac{1}{p}-\frac{1}{q})+kj} \| u\|_{L^p(\G)}.
$$
On the other hand, if 
${\mathcal F} _B(u)(\la)F_{\alpha,\lambda} = 
 {\bf 1}_{ \lam \in (2|\alpha|+\ell)^{-1}2^{2j}{\mathcal C}_0}   ,
$
then for all~$ \rh \in \R$,
$$
    C_\rh^{-1} 2^{-j \rh}  \| (-\Delta_{\G})^{\frac{\rh}{2}} u\|_{L^p(\G)} \leq \| u\|_{L^p(\G)} 
\leq C_\rh 2^{-j \rh}  \| (-\Delta_{\G})^{\frac{\rh}{2}} u\|_{L^p(\G)}.
$$
\end{lem}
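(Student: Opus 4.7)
The strategy is to use the spectral assumption to represent $u$ as a convolution with a kernel built by functional calculus of $-\Delta_\G$, and then reduce both statements to the kernel estimates~(\ref{estimatekernel})--(\ref{thirdestimatekernel}) recalled in Section~\ref{lpbesov}. The scheme is identical to~\cite{bg}; I only indicate the main steps. Recall the Bargmann diagonalization
\[
\mathcal F_B(-\Delta_\G f)(\lambda)F_{\alpha,\lambda}=4|\lambda|(2|\alpha|+\ell)\mathcal F_B(f)(\lambda)F_{\alpha,\lambda},
\]
so that the condition $\lambda\in (2|\alpha|+\ell)^{-1}2^{2j}\mathcal B_0$ is exactly the statement that the $-\Delta_\G$-spectrum of $u$ is supported in the ball of radius $\lesssim 2^{2j}$. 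Picking $\phi\in C_c^\infty(\R)$ even with $\phi\equiv 1$ on $\mathcal B_0$ and $\mathrm{supp}\,\phi\subset[-4,4]$, and letting $\Phi_j$ denote the convolution kernel of $\phi(-2^{-2j}\Delta_\G)$, the spectral localization yields $u=u\star\Phi_j$.

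For the first estimate I would apply $\X^\beta$ and invoke Young's inequality with $1/r=1+1/q-1/p$:
\[
\|\X^\beta u\|_{L^q(\G)}=\|u\star \X^\beta\Phi_j\|_{L^q(\G)}\leq \|u\|_{L^p(\G)}\,\|\X^\beta\Phi_j\|_{L^r(\G)}.
\]
The kernel estimate~(\ref{estimatekernel})---or rather its extension to any Schwartz symbol at scale $2^{2j}$, which follows from the same heat-kernel arguments of~\cite{furioli}---gives $\|\X^\beta\Phi_j\|_{L^r(\G)}\lesssim 2^{j(N/r'+|\beta|)}=2^{j(N(1/p-1/q)+|\beta|)}$, since $1/r'=1/p-1/q$, which is exactly the claimed bound.

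For the second estimate, under the annular assumption I would choose $\tilde\psi\in C_c^\infty(\R\setminus\{0\})$ equal to $1$ on $\mathcal C_0$ and introduce $\phi_\rho(\tau)=\tau^{\rho/2}\tilde\psi(\tau)$, a smooth function compactly supported away from $0$. Then
\[
(-\Delta_\G)^{\rho/2}u=2^{j\rho}\phi_\rho(-2^{-2j}\Delta_\G)u=2^{j\rho}\,u\star K_{\rho,j},
\]
and the $L^1$ case of~(\ref{estimatekernel}) (with $|I|=0$) gives $\|K_{\rho,j}\|_{L^1(\G)}\lesssim 1$ uniformly in $j$, whence Young's inequality yields the upper bound. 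The reverse inequality follows by applying the same recipe to $\phi_{-\rho}(\tau)=\tau^{-\rho/2}\tilde\psi(\tau)$ on the (still spectrally localized) function $(-\Delta_\G)^{\rho/2}u$, which by the annular hypothesis equals $\tilde\psi(-2^{-2j}\Delta_\G)(-\Delta_\G)^{\rho/2}u$.

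The only delicate point is that~(\ref{estimatekernel}) is stated in~\cite{furioli} for the specific Littlewood--Paley cutoff $\psi$; the mild extension to an arbitrary symbol $\phi$ in a fixed bounded subset of $C_c^\infty$, and in particular to $\phi_{\pm\rho}$ supported away from $0$, is standard and relies on the Gaussian bounds for the heat kernel combined with the multiplier theorem of~\cite{hulanicki}, exactly as in the Heisenberg case treated in~\cite{bg}. Once this is granted, the two inequalities in the lemma are just the two incarnations of Young's inequality above.
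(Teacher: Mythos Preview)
Your proposal is correct and is precisely the argument the paper has in mind: the paper omits the proof, writing that it is ``identical to the Heisenberg situation treated in~\cite{bg}'', and the scheme you describe---reinterpret the Bargmann localization as a spectral cutoff for $-\Delta_\G$, reproduce $u$ by convolution with a scaled multiplier kernel, and conclude via Young's inequality together with the kernel bounds of~\cite{furioli,hulanicki}---is exactly that argument. Your final remark, that~(\ref{estimatekernel}) must be invoked for a general compactly supported symbol rather than just the fixed $\psi$, is the right caveat and is handled, as you say, by the same heat-kernel/Hulanicki machinery.
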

\subsection{Paraproduct on H-type groups}\label{paraproduct}
In order to develop a paraproduct on H-type groups, one has to prove that the product of two functions is localized in frequencies whenever the functions are localized. This is the object of the next lemma, whose proof is the same as that of Proposition 4.2 of~\cite{bg}.  

 \begin{lem}\label{loc}There is a constant $M_1\in\N$ such that the following holds. Consider~$f$ and $g$ two functions of~${\mathcal S}(\G)$ such that
$$
{\mathcal F} _B(f)(\lam)F_{\alpha,\lambda} =  \displaystyle {\bf 1}_{\lam \in(2|\alpha|+\ell)2^{2m}{\mathcal C}_0}(\lam){\mathcal F} _B(f)(\lam)F_{\alpha,\lambda} \quad \mbox{and} 
$$
$$
{\mathcal F} _B(g)(\lam) F_{\alpha,\lambda} =  \displaystyle{\bf 1}_{\lam \in(2|\alpha|+\ell)2^{2m'}{\mathcal C}_0}(\lam){\mathcal F} _B(g)(\lam)F_{\alpha,\lambda} 
$$
  for   $m$ and $m'$  integers. If
   $m'-m>M_1$, then there exists a ring~$\widetilde{\mathcal C}$ such that
  $${\mathcal F }_B(fg)(\lam)F_{\alpha,\lambda} ={\bf 1}_{\lam \in(2|\alpha|+\ell)2^{2m'} \widetilde{\mathcal C}}(\lam){\mathcal F} _B(fg)(\lam)F_{\alpha,\lambda} .$$
  On the other hand, if $|m'-m|\leq M_1$, then there exists a ball $\tilde{\mathcal B}$ such that
  $${\mathcal F} _B(fg)(\lam)F_{\alpha,\lambda} ={\bf 1}_{\lam \in(2|\alpha|+\ell)2^{2m'} \widetilde{\mathcal B}}(\lam){\mathcal F} _B(fg)(\lam)F_{\alpha,\lambda} .$$ 
  \end{lem}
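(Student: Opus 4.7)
I would follow the Heisenberg paraproduct construction of \cite{bg}, adapted to the H-type setting using the Bargmann calculus of Subsection~\ref{Fouriertrans}. Via the diagonalization $\mathcal{F}_B(-\Delta_\G f)(\lambda)F_{\alpha,\lambda} = 4|\lambda|(2|\alpha|+\ell)\,\mathcal{F}_B(f)(\lambda)F_{\alpha,\lambda}$ together with the inversion formula of Theorem~\ref{inversionth}, showing that $\mathcal{F}_B(fg)(\lambda)F_{\alpha,\lambda}$ is supported where $|\lambda|(2|\alpha|+\ell)$ lies in a prescribed set is equivalent to producing a smooth cutoff $\widetilde\phi$ (supported in a ring $\widetilde{\mathcal{C}}$ in the first case, a ball $\widetilde{\mathcal{B}}$ in the second) such that $\widetilde\phi(-2^{-2m'}\Delta_\G)(fg) = fg$. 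The route I would take is to estimate $\|(-\Delta_\G)^{\pm N}(fg)\|_{L^p}$ for arbitrarily large $N$, using the noncommutative Leibniz identity
$$
\Delta_\G(fg) = (\Delta_\G f)g + f(\Delta_\G g) + 2\sum_{i=1}^k (X_i f)(X_i g)
$$
together with Lemma~\ref{lem:lech}, and then read off the spectral support.

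\medskip

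\noindent\textbf{Upper bound on the frequency of $fg$.} Iterating the Leibniz identity $N$ times, $(-\Delta_\G)^N(fg)$ is a finite linear combination of terms $X^I f \cdot X^J g$ with $|I|+|J| = 2N$ and $X^I, X^J$ words in the horizontal fields. The Bernstein estimates of Lemma~\ref{lem:lech} give $\|X^I f\|_{L^{q_1}} \lesssim 2^{(|I| + N(\frac1{p_1}-\frac1{q_1}))m}\|f\|_{L^{p_1}}$ and similarly for $g$; choosing exponents with $1/q_1 + 1/q_2 = 1/p$ and applying H\"older's inequality, one obtains
$$
\|(-\Delta_\G)^N(fg)\|_{L^p} \lesssim 2^{2N\max(m,m')}\,\|f\|_{L^{p_1}}\|g\|_{L^{p_2}}.
$$
Via the spectral calculus this forces $\mathcal{F}_B(fg)(\lambda)F_{\alpha,\lambda}$ to vanish whenever $4|\lambda|(2|\alpha|+\ell) \gg 2^{2m'}$, which already yields the ball statement in the regime $|m'-m|\leq M_1$.

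\medskip

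\noindent\textbf{Lower bound and the ring case.} When $m'-m > M_1$ one must additionally rule out the low-frequency region near zero. Here I would exploit that both $f$ and $g$ live in rings, so that $(-\Delta_\G)^{-1}$ acts on them with symbol sizes $2^{-2m}$ and $2^{-2m'}$ respectively; running the Leibniz identity backwards, one can express $(-\Delta_\G)^{-N}(fg)$ as a finite sum of terms $(-\Delta_\G)^{-N_1} X^I f \cdot (-\Delta_\G)^{-N_2} X^J g$ plus commutator corrections, and estimate each piece to obtain
$$
\|(-\Delta_\G)^{-N}(fg)\|_{L^p} \lesssim 2^{-2Nm'}\,\|f\|_{L^{p_1}}\|g\|_{L^{p_2}}.
$$
The constant $M_1$ is then fixed by demanding that the geometric series in the gap parameter $2^{2(m-m')}$ generated by these corrections be strictly summable. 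I expect this last step to be the only real obstacle: since $(-\Delta_\G)^{-1}$ is not a differential operator, its interaction with pointwise multiplication must be analyzed directly in the Bargmann picture, and one has to verify in the non-commutative setting that the spectrum of the product is genuinely transported away from zero. Everything else — the Leibniz iteration, the Bernstein step, and the translation between spectral estimates and Fourier support via Theorem~\ref{inversionth} — transcribes essentially verbatim from the Heisenberg argument of \cite{bg}.
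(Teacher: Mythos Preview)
The paper omits the proof, referring to Proposition~4.2 of~\cite{bg}. Your plan correctly identifies the Leibniz--Bernstein machinery as the backbone of that argument, and your treatment of the ball case is essentially right: iterating the identity $\Delta_\G(fg) = (\Delta_\G f)g + f(\Delta_\G g) + 2\sum_i (X_i f)(X_i g)$ and invoking Lemma~\ref{lem:lech} gives $\|(-\Delta_\G)^N(fg)\|_{L^2} \leq A^N\, 2^{2N m'}$ with $A$ independent of $N,m,m'$, after which the spectral theorem confines the support of $fg$ to $[0,\,A\cdot 2^{2m'}]$. Two technical points you gloss over are routine but should be said: the passage from norm bounds to \emph{exact} spectral support needs $p=2$ and Plancherel, and one must check that the combinatorial and Bernstein constants arising in the $N$-fold iteration grow at most exponentially in $N$.

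The genuine gap is in the ring case. There is no ``reverse Leibniz identity'' for $(-\Delta_\G)^{-N}$: this is a nonlocal operator, and $(-\Delta_\G)^{-N}(fg)$ simply cannot be expanded as a finite sum of products $(-\Delta_\G)^{-N_1}X^I f \cdot (-\Delta_\G)^{-N_2}X^J g$ plus controllable corrections --- no such algebraic identity exists, and the ``commutator corrections'' you invoke are not finite-order objects. You rightly flag this step as the obstacle, but the proposed route does not go through. A clean way, consistent with the overall strategy of~\cite{bg}, is to deduce the lower frequency cutoff from the ball case by duality: to show $S_{m'-M}(fg)=0$, write
$$
\langle S_{m'-M}(fg),\,\phi\rangle \;=\; \langle g,\; \bar f \cdot S_{m'-M}\phi\rangle .
$$
Now $\bar f$ lies spectrally in a ball at scale $2^{2m}$ and $S_{m'-M}\phi$ in a ball at scale $2^{2(m'-M)}$, so the already-established ball bound (which only used the first, ball-hypothesis part of Lemma~\ref{lem:lech}) places $\bar f \cdot S_{m'-M}\phi$ in a ball at scale $A\cdot 2^{2\max(m,\,m'-M)}$. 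Choosing $M_1$ and $M$ large relative to $\log_2 A$ makes this ball disjoint from the ring $2^{2m'}\mathcal C_0$ carrying $g$, whence the pairing vanishes by orthogonality of spectral subspaces of the self-adjoint operator $-\Delta_\G$. This fixes $M_1$ and the ring $\widetilde{\mathcal C}$ simultaneously, with no need to invert the sub-Laplacian on products.
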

\begin{defi}We shall call paraproduct of~$v$ by~$u$ and shall denote by~$T_u v$ the   bilinear operator~$
\displaystyle T_u v =  \sum_{j} S_{j-1}u \,\D_jv.
$ We shall call remainder of~$u$ and~$v$ and shall denote by~$R(u,v)$ the   bilinear operator~$
 \displaystyle R(u,v) =  \sum_{|j-j'|\leq 1} \D_ju \,\D_{j'}v.
 $
 \end{defi}
 \begin{remark} 
It is clear that formally
$
uv = T_u v + T_v u + R(u,v). 
$
\end{remark}
\noindent One of the classical consequences of Lemma~\ref{loc} is the following result, which is obtained using the previous decomposition as well as localization properties of the paraproduct and remainder terms.
\begin{cor}\label{productlaws}
 Let $\rh >0$ and $(p,r) \in [1,+\infty]^2$ be three real numbers. Then
 $$
\|fg\|_{B^{\rh }_{p ,r }(\G)} \leq C
(\|f\|_{L^\infty}\|g\|_{B^{\rh }_{p ,r }(\G)}+\|g\|_{L^\infty}\|f\|_{B^{\rh }_{p ,r }(\G)}).
$$
If $\rh_1 + \rh_2 > 0$ and if $p_1$ is such that $\rh_1 < Q/p_1$, then for all $(p_2,r_2) \in [1,+\infty]^2$  writing $\rh = \rh_1 + \rh_2 - Q/p_1$,
$$
\|fg\|_{B^{\rh}_{p_2,r_2}(\G)} \leq C (\|f\|_{B^{\rh_1}_{p_1,\infty}}
\|g\|_{B^{\rh_2}_{p_2,r_2}}+\|g\|_{B^{\rh_1}_{p_1,\infty}}
\|f\|_{B^{\rh_2}_{p_2,r_2}}).
$$
\noindent Moreover, if ~$\rh_1 + \rh_2 \geq 0$,  $\rh_1 < Q/p_1$ and~$\displaystyle \frac{1}{r_1} + \frac{1}{r_2} = 1$, then $$
\|fg\|_{B^{\rh}_{p,\infty}(\G)} \leq C (\|f\|_{B^{\rh_1}_{p_1,r_1}} \|
g\|_{B^{\rh_2}_{p_2,r_2}}+\|g\|_{B^{\rh_1}_{p_1,r_1}}
\|f\|_{B^{\rh_2}_{p_2,r_2}}).
$$
 Finally if  $\rh_1 + \rh_2 > 0$, $\rh_j < Q/p_j$ and $p \geq
 \max(p_1,p_2)$, then for all~$(r_1,r_2)$,
$$
\|fg\|_{B^{\rh_{12}}_{p,r}(\G)} \leq C \|f\|_{B^{\rh_1}_{p_1,r_1}}
\|g\|_{B^{\rh_2}_{p_2,r_2}},
$$
with $\displaystyle \rh_{12} = \rh_1+\rh_2 -
Q(\frac{1}{p_1}+\frac{1}{p_2}-\frac{1}{p})$ and $r = \max(r_1,r_2)$, and if~$\rh_1 +
\rh_2 \geq 0$, with~$\rh_j < Q/p_j$ and with~$\displaystyle \frac{1}{r_1} +
\frac{1}{r_2} = 1$, then for all~$p
\geq \max(p_1,p_2)$,
$$
\|fg\|_{B^{\rh_{12}}_{p,\infty}(\G)} \leq C \|f\|_{B^{\rh_1}_{p_1,r_1}}
\|g\|_{B^{\rh_2}_{p_2,r_2}}.
$$
\end{cor}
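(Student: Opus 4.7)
All four estimates would follow from Bony's paraproduct decomposition
$$fg = T_f g + T_g f + R(f,g)$$
combined with the spectral-localization Lemma~\ref{loc}, the Bernstein inequalities of Lemma~\ref{lem:lech}, and routine $\ell^r$ summation. My plan is to control the paraproducts and the remainder separately in each of the four contexts and then combine the three bounds.

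For the paraproduct $T_f g = \sum_j S_{j-1}f\,\Delta_j g$, the first part of Lemma~\ref{loc} (applied to $S_{j-1}f$, whose spectrum sits in a ball of radius $\sim 2^{j-1}$, and $\Delta_j g$, whose spectrum sits in the ring $2^j \mathcal{C}_0$) implies that each summand is spectrally supported in a ring of size $2^j$; hence $\Delta_{j''}(T_f g) \neq 0$ only for $|j-j''|\leq N_0$, and it suffices to bound $\|S_{j-1}f\,\Delta_j g\|_{L^p}$ in each block. Estimate~(1) comes from H\"older with $\|S_{j-1}f\|_{L^\infty}\leq \|f\|_{L^\infty}$ and $\|\Delta_j g\|_{L^p} \leq c_j 2^{-j\rh}\|g\|_{B^{\rh}_{p,r}}$, $(c_j)\in \ell^r$. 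For estimates (2)--(4), the $L^\infty$-norm of $S_{j-1}f$ is instead controlled via Bernstein,
$$\|S_{j-1}f\|_{L^\infty} \leq \sum_{j'\leq j-1}\|\Delta_{j'}f\|_{L^\infty} \lesssim \sum_{j'\leq j-1} 2^{j'(Q/p_1-\rh_1)}\|f\|_{B^{\rh_1}_{p_1,\infty}} \lesssim 2^{j(Q/p_1-\rh_1)}\|f\|_{B^{\rh_1}_{p_1,\infty}},$$
the geometric sum converging precisely because $\rh_1 < Q/p_1$. Combining with the Besov-norm bound on $\|\Delta_j g\|_{L^{p_2}}$ produces the expected exponent $\rh_1 + \rh_2 - Q/p_1$ after multiplication by $2^{j''(\rh_1+\rh_2-Q/p_1)}$. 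The symmetric term $T_g f$ is handled identically.

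The main obstacle is the remainder $R(f,g) = \sum_{|j-j'|\leq 1}\Delta_j f\,\Delta_{j'} g$, since by the second part of Lemma~\ref{loc} each dyadic summand is only localized in a \emph{ball} of size $2^j$, not a ring. Consequently $\Delta_{j''} R(f,g)$ receives contributions from all $j \geq j'' - N_0$, forcing one to sum an infinite tail. After bounding $\|\Delta_j f\,\Delta_{\sim j} g\|_{L^p}$ by H\"older and Bernstein, one obtains geometric factors in $j$ whose summation on $j \geq j''$ converges only when $\rh_1 + \rh_2 > 0$; this is exactly what makes (2) and (4) work, and it also determines the loss $Q/p_1$ or $\rh_{12}$ through the Bernstein rescaling $\|\Delta_j f\|_{L^{p_1^{\ast}}} \lesssim 2^{jQ/p_1}\|\Delta_j f\|_{L^{p_1}}$. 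In the borderline case $\rh_1+\rh_2 = 0$ of estimate~(3), the tail is instead treated as a discrete convolution via Young's inequality, which requires the hypothesis $1/r_1 + 1/r_2 = 1$ and forces the output sequence to lie in $\ell^\infty$, explaining the landing space $B^{\rh_{12}}_{p,\infty}$. Assembling the bounds on $T_f g$, $T_g f$ and $R(f,g)$ yields the four claimed inequalities.
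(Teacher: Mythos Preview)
Your proposal is correct and follows exactly the approach the paper indicates: the paper does not give a detailed proof of this corollary but simply states that it ``is obtained using the previous decomposition as well as localization properties of the paraproduct and remainder terms,'' which is precisely the Bony decomposition $fg = T_fg + T_gf + R(f,g)$ combined with Lemma~\ref{loc} and the Bernstein inequalities of Lemma~\ref{lem:lech} that you outline. Your sketch in fact supplies more detail than the paper itself, including the correct identification of why the constraints $\rho_1<Q/p_1$, $\rho_1+\rho_2>0$, and $1/r_1+1/r_2=1$ arise (convergence of the low-frequency sum in the paraproduct, of the high-frequency tail in the remainder, and the Young-in-$\ell^r$ argument in the borderline case, respectively).
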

\noindent The same results hold in the case of homogeneous Besov spaces.  Once the paraproduct algorithm is in place, one can obtain (refined)  Sobolev and Hardy inequalities (see~\cite{cheminxu} and~\cite{bg}  for Sobolev embeddings in the euclidean case and for the Heisenberg group,   and~\cite{hardyBCG} for the Hardy inequalities --  see also~\cite{chamorropaper} for recent extensions).
  One can also construct, in the context of H-type groups, an algebra of pseudo-differential operators exactly as on the Heisenberg group. We refer to~\cite{bfg} for details.

\end{document}